\newtheorem{theorem}{Theorem}
\newtheorem{corollary}{Corollary}
\newtheorem{lemma}{Lemma}
\newtheorem{proposition}{Proposition}
\newtheorem{example}{Example}
\newtheorem{remark}{Remark}
\renewcommand{\P}{\mathcal{P}}
\begin{document}
\title{Projectively coresolved Gorenstein flat and Ding projective modules}
\thanks{2010 MSC: 16E05, 16E10}
\thanks{Key Words: Ding projective modules, Gorenstein $\mathcal{B}$-flat modules, Gorenstein projective modules, projectively coresolved Gorenstein $\mathcal{B}$-flat modules}
\author{Alina Iacob}
\address[A. Iacob]{Department of Mathematical Sciences. Georgia Southern University. Statesboro (GA) 30460-8093. USA}
\email{aiacob@GeorgiaSouthern.edu}

\begin{abstract}
We give necessary and sufficient conditions in order for the class of projectively coresolved Gorenstein flat modules, $\mathcal{PGF}$, (respectively that of projectively coresolved Gorenstein $\mathcal{B}$ flat modules, $\mathcal{PGF}_{\mathcal{B}}$) to coincide with the class of Ding projective modules ($\mathcal{DP})$. We show that $\mathcal{PGF} = \mathcal{DP}$ if and only if every Ding projective module is Gorenstein flat. This is the case if the ring $R$ is coherent for example. We include an example to show that the coherence is a sufficient, but not a necessary condition in order to have $\mathcal{PGF} = \mathcal{DP}$. We also show that $\mathcal{PGF} = \mathcal{DP}$ over any ring $R$ of finite weak Gorenstein global dimension (this condition is also sufficient, but not necessary). We prove that if the class of Ding projective modules, $\mathcal{DP}$, is covering then the ring $R$ is perfect. And we show that, over a coherent ring $R$, the converse also holds. We also give necessary and sufficient conditions in order to have $\mathcal{PGF} = \mathcal{GP}$, where $\mathcal{GP}$ is the class of Gorenstein projective modules.
\end{abstract}
\maketitle




\section{introduction}
We consider several classes of modules:\\
1. The \emph{Gorenstein projective} modules were introduced in 1995 (\cite{enochs:95:gorenstein}) by Enochs and Jenda, as a generalization of Auslander's modules of G-dimension zero. They are the cycles of exact complexes of projective modules that stay exact when applying a functor $Hom(-,P)$ for any projective module $P$. We use $\mathcal{GP}$ to denote the class of Gorenstein projective modules.\\
2, Another generalization of Auslander's modules of G-dimension zero is the class of \emph{Gorenstein flat} modules. They were introduced in 1994 (\cite{enochs:94:gorflat}) by Enochs, Jenda and Torrecillas. They are the cycles of the exact complexes of flat modules that remain exact when tensored with any injective module. We use $\mathcal{GF}$ to denote the class of Gorenstein flat modules. It is known that over certain classes of rings any Gorenstein projective module is also Gorenstein flat. For example, this is the case for any right coherent and left n-perfect ring. But whether or not this inclusion holds in general, this is an open question\\
3. Let $\mathcal{B}$ be any class of right $R$-modules. The \emph{Gorenstein $\mathcal{B}$ flat} modules were defined in \cite{EIP} as a relative version of the Gorenstein flat modules. They are the cycles of exact complexes of flat modules that stay exact when tensored with any module $B \in \mathcal{B}$. We denote this class of modules by $\mathcal{GF}_{\mathcal{B}}$. It is immediate from the definition that $\mathcal{GF}_{\mathcal{B}} \subseteq \mathcal{GF}$ when $\mathcal{B}$ contains the injective modules.\\
4. The \emph{Ding projective} modules were introduced by Ding, Li and Mao in \cite{ding:09:ding.proj} where they were called strongly
Gorenstein flat modules. They are the cycles of the exact complexes of projective modules that stay exact when applying a functor $Hom(-,F)$, for any flat module $F$. In \cite{gil:10:model}, \cite{gil:17:ding} Gillespie renamed these modules Ding projective modules. We use $\mathcal{DP}$ to denote this class of modules. It is immediate from the definition that any Ding projective module is Gorenstein projective.\\
5. Recently Saroch and Stovicek introduced a new class of modules (in \cite{saroch:18:gor}) - the \emph{projectively coresolved Gorenstein flat} modules, or $PGF$-modules for short. They are the cycles of exact complexes of projective modules that remain exact when tensored with any injective module. The class of projectively coresolved Gorenstein flat modules is denoted $\mathcal{PGF}$. As noted in \cite{saroch:18:gor}, these modules are both Gorenstein projective and Gorenstein flat. Also, any projectively coresolved Gorenstein flat module is Ding projective (see section 2). \\
6. Let $\mathcal{B}$ be any class of right $R$-modules. The \emph{projectively coresolved Gorenstein $\mathcal{B}$ flat} modules ($\mathcal{PGF}_{\mathcal{B}}$ modules for short) were introduced in \cite{EIP}; they are  the cycles of the exact complexes of projective modules that remain exact when tensored with any module $B \in \mathcal{B}$.\\
7. The \emph{Gorenstein AC-projective} modules were introduced by Bravo, Gillespie, and Hovey in \cite{gillespie:14:stable}. We recall first that a right $R$-module $M$ is said to be of type $FP_{\infty}$ if there exists an exact complex $ \ldots \rightarrow P_1 \rightarrow P_0 \rightarrow M \rightarrow 0$ with all the $P_j$ finitely generated projective modules. We also recall that a left $R$-module $N$ is called \emph{level} if $Tor_1(M, N) = 0$ for every right R-module M of type $FP_{\infty}$. The Gorenstein AC-projective modules are the cycles of the exact complexes of projective modules that stay exact when applying any functor $Hom(-,L)$ where $L$ is any level module. We use the notation $\mathcal{GF}_{ac}$ for the class of Gorenstein AC-projective modules.

We consider the following questions:\\
- When is it true that the class of $\mathcal{PGF}_{\mathcal{B}}$ modules coincides with that of Ding projective modules? In particular when is it true that the $\mathcal{PGF}$ modules are the same with the Ding projective modules?\\
- When is it true that the Ding projective modules and the Gorenstein projective modules coincide?\\
- When is it true that the class of $\mathcal{PGF}$ modules coincides with that of Gorenstein projective modules? 

Since some of our results involve semi-definable classes, we recall below a few more definitions.

We recall first that a class of modules $\mathcal{D}$ is called \emph{definable} if it is closed under direct products, direct limits and pure submodules. It is known that such a definable class $\mathcal{D}$ has an elementary cogenerator. A module $D_0 \in \mathcal{D}$ is said to be an \emph{elementary cogenerator} for $\mathcal{D}$ if it is a pure injective module (i.e. it is injective with respect to pure exact sequences), and if every $D \in \mathcal{D}$ is a pure submodule of some direct product of copies of $D_0$.\\
We are particularly interested in classes of modules that contain an elementary cogenerator of their definable closure. The \emph{definable closure} of a class of modules $\mathcal{B}$, denoted $<\mathcal{B}>$, is the smallest definable class containing $\mathcal{B}$.\\
By \cite{EIP}, a class $\mathcal{B}$ is called \emph{semi-definable} if it is closed under arbitrary direct products and contains an elementary cogenerator of its definable closure $<\mathcal{B}>$.


We prove first that if $\mathcal{B}$ is a semi-definable class of right $R$-modules such that $\mathcal{B}$ contains the class of injectives then $\mathcal{PGF}_{\mathcal{B}} = \mathcal{DP} \bigcap \mathcal{GF}_{\mathcal{B}}$. Consequently $\mathcal{PGF}_{\mathcal{B}} = \mathcal{DP}$ if and only if $\mathcal{DP} \subseteq \mathcal{GF}_{\mathcal{B}}$. 
Then we show that when $\mathcal{B}$ is a semi-definable class of right $R$-modules that contains the injective modules we have that $\mathcal{DP} = \mathcal{PGF}_{\mathcal{B}}$ if and only if every Ding projective module has finite Gorenstein $\mathcal{B}$ flat dimension. In particular, over any ring $R$, we have that $\mathcal{DP} = \mathcal{PGF}$ if and only if every Ding projective module has finite Gorenstein flat dimension. Consequently we have that $\mathcal{PGF} = \mathcal{DP}$ over any ring $R$ of finite weak Gorenstein global dimension. The condition that the ring has finite weak Gorenstein global dimension is a sufficient, but not a necessary one for the two classes to coincide. Theorem 1 shows that $\mathcal{DP} = \mathcal{PGF}$ if and only if the class $Inj^+$ of all character modules of injective right $R$-modules is contained in $\mathcal{DP}^\bot$. This implies that $\mathcal{DP} = \mathcal{PGF}$ over any coherent ring $R$ (not necessarily of finite weak Gorenstein global dimension). Example 1 shows that the coherence is also a sufficient but not a necessary condition on the ring: if $R$ is a noncoherent ring of finite global dimension, then we have $\mathcal{DP} = \mathcal{GP} = \mathcal{PGF}$.\\
We prove that if the class of Ding projective modules, $\mathcal{DP}$, is covering then the ring $R$ is perfect. And we show that, over a coherent ring $R$, the converse also holds.\\ We also give necessary and sufficient conditions in order to have $\mathcal{PGF} = \mathcal{GP}$. In particular, we show (Theorem 4) that over a coherent ring $R$ we have that $\mathcal{PGF} = \mathcal{GP}$ if and only if $Flat \subseteq \mathcal{GP}^\bot$. We also prove (Proposition 9) that if $R$ is a ring such that every injective module has finite flat dimension then we have $\mathcal{GP} = \mathcal{DP} = \mathcal{PGF}$.

\section{results}

Throughout, $R$ denotes an associative ring with unity. Unless otherwise specified, by $R$-module we mean left $R$-module. We use $Proj$, $Flat$, and $Inj$ to denote the classes of projective, flat, and, respectively, injective modules. $\mathcal{B}$ denotes a (fixed) class of right $R$-modules.

Given a class of modules $\mathcal{C}$, we denote by $\mathcal{C}^\bot$ its right orthogonal class, i.e. the class of modules $X$ such that $Ext^1(C,X)=0$ for any $C \in \mathcal{C}$. The left orthogonal class of $\mathcal{C}$ is defined dually. We recall that a pair of classes of $R$-modules $(\mathcal{C}, \mathcal{L})$, is a \emph{cotorsion pair} if $\mathcal{C} ^ \bot = \mathcal{L}$ and $^ \bot \mathcal{L} = \mathcal{C}$. A cotorsion pair is \emph{complete} if for any $_RM$ there are exact sequences $0 \rightarrow L \rightarrow C \rightarrow M \rightarrow 0$ and respectively $0 \rightarrow M \rightarrow L' \rightarrow C' \rightarrow 0$ with $C, C' \in \mathcal{C}$ and $L, L' \in \mathcal{L}$. We also recall that a cotorsion pair is said to be \emph{hereditary} if $Ext^i(C,L)=0$ for all $i \ge 1$, all $C \in \mathcal{C}$, all $L \in \mathcal{L}$. It is known that this is equivalent with the class $\mathcal{C}$ being closed under kernels of epimorphisms, and it is also equivalent with the condition that $\mathcal{L}$ is closed under cokernels of monomorphisms.

Over any ring $R$, we have that $\mathcal{PGF} \subseteq \mathcal{GF}$ (by definition). By \cite{saroch:18:gor} we have $\mathcal{GP}_{ac} \subseteq \mathcal{PGF} \subseteq \mathcal{GP}$ over any ring $R$ (see \cite{saroch:18:gor}, page 15 and Theorem 3.4).

\begin{lemma}
If $\mathcal{B} \supseteq Inj$ then $\mathcal{PGF}_{\mathcal{B}} \subseteq \mathcal{DP}$.
\end{lemma}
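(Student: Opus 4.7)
The plan is to use the complex witnessing $M \in \mathcal{PGF}_{\mathcal{B}}$ to witness Ding projectivity of $M$. Let $P_\bullet$ be an exact complex of projective left $R$-modules with $M = Z_0(P_\bullet)$ and $B \otimes_R P_\bullet$ exact for every $B \in \mathcal{B}$. Since every cycle of $P_\bullet$ is again in $\mathcal{PGF}_{\mathcal{B}}$, it suffices to show that $Ext^1_R(N, F) = 0$ for every $N \in \mathcal{PGF}_{\mathcal{B}}$ and every flat left $R$-module $F$; that will give Ding projectivity with the same defining complex.

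The decisive input is that for any flat $F$, the character module $F^+ = Hom_{\mathbb{Z}}(F, \mathbb{Q}/\mathbb{Z})$ is an injective right $R$-module, so $F^+ \in Inj \subseteq \mathcal{B}$. Hence $F^+ \otimes_R P_\bullet$ is exact, which forces $Tor_i^R(F^+, N) = 0$ for every $i \geq 1$. The natural isomorphism $Ext^i_R(N, F^{++}) \cong Tor_i^R(F^+, N)^+$ then yields $Ext^i_R(N, F^{++}) = 0$ for all $i \geq 1$.

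To transfer this vanishing from $F^{++}$ to $F$, I would use the canonical pure embedding $F \hookrightarrow F^{++}$, whose cokernel $C$ is flat (since $F^{++}$ is flat when $F$ is, and $F$ is pure in $F^{++}$). The long exact $Ext$ sequence for $0 \to F \to F^{++} \to C \to 0$, combined with $Ext^i_R(N, F^{++}) = 0$ for $i \geq 1$, identifies $Ext^1_R(N, F)$ as the cokernel of the natural map $Hom_R(N, F^{++}) \to Hom_R(N, C)$ and produces shifted isomorphisms $Ext^{i+1}_R(N, F) \cong Ext^i_R(N, C)$ for $i \geq 1$.

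The hard part is showing this cokernel vanishes: iterating naively on $C$ (also flat) only reshuffles cohomological degrees. The closure I expect to use exploits the fact that $N$ embeds into a projective term $P$ of the complex $P_\bullet$ with cokernel $\Omega^{-1}N$ again in $\mathcal{PGF}_{\mathcal{B}}$; the surjection $Hom_R(P, F^{++}) \twoheadrightarrow Hom_R(P, C)$ coming from projectivity realizes $Ext^1_R(N, F)$ as a quotient of $Ext^1_R(\Omega^{-1}N, C)$. Iterating this comparison along the coresolution of $N$ inside $P_\bullet$ and invoking the cotorsion pair structure for $\mathcal{PGF}$ from \cite{saroch:18:gor} is where the technical heart of the proof lies.
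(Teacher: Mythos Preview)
Your reduction is exactly the one the paper uses: take the defining complex $P$ for $M\in\mathcal{PGF}_{\mathcal{B}}$, note that all cycles $Z_jP$ lie in $\mathcal{PGF}_{\mathcal{B}}$, and conclude Ding projectivity once you know $Ext^1(N,F)=0$ for every $N\in\mathcal{PGF}_{\mathcal{B}}$ and every flat $F$. The paper, however, does not argue this vanishing by hand: it simply quotes \cite{EIP}, Proposition~2.9(2), which states $Flat\subseteq\mathcal{PGF}_{\mathcal{B}}^{\bot}$ whenever $Inj\subseteq\mathcal{B}$, and is done in two lines.

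Your attempt to reprove that inclusion stalls at precisely the point you flag. From $Tor_i(F^+,N)=0$ you correctly obtain $Ext^i(N,F^{++})=0$, but the descent from $F^{++}$ to $F$ via the pure sequence $0\to F\to F^{++}\to C\to 0$ gives only $Ext^{i+1}(N,F)\cong Ext^i(N,C)$ for $i\ge 1$ together with the identification of $Ext^1(N,F)$ as a cokernel. Iterating, as you observe, merely shifts indices against a new flat module and never terminates; shifting along the coresolution of $N$ has the same effect in the other variable. The closing remark ``invoking the cotorsion pair structure for $\mathcal{PGF}$ from \cite{saroch:18:gor}'' is not an argument: you have not said which consequence of that structure you mean to use, and the relevant consequence \emph{is} $Flat\subseteq\mathcal{PGF}^{\bot}$, i.e.\ the very thing you are trying to establish.

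There are two clean ways to close the gap. Either cite \cite{EIP}, Proposition~2.9(2), as the paper does. Or observe that $Inj\subseteq\mathcal{B}$ forces $\mathcal{PGF}_{\mathcal{B}}\subseteq\mathcal{PGF}$, and then invoke from \cite{saroch:18:gor} the established fact $\mathcal{PGF}\subseteq\mathcal{GP}$ (equivalently $Flat\subseteq\mathcal{PGF}^{\bot}$), which already uses nontrivial input; this immediately yields $Flat\subseteq\mathcal{PGF}_{\mathcal{B}}^{\bot}$. Either way, the descent from $F^{++}$ to $F$ is not something you should expect to finish by the elementary dimension-shifting you outlined.
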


\begin{proof}
If $\mathcal{B} \supseteq Inj$ then $Flat \subseteq \mathcal{PGF}_{\mathcal{B}} ^ \bot$ (\cite{EIP}, Proposition 2.9(2)). Let $M \in \mathcal{PGF}_{\mathcal{B}}$. Then there exists an exact complex of projective modules $P = \ldots \rightarrow P_1 \rightarrow P_0 \rightarrow P_{-1} \rightarrow \ldots$ such that $Z_j P \in \mathcal{PGF}_{\mathcal{B}}$ for all $j$. Then $Ext^1(Z_jP, F) =0$ for all $j$, for any flat module $F$, so $Hom(P, F)$ is exact for any flat module $F$. It follows that $M \in \mathcal{DP}$.
\end{proof}

\begin{corollary}
 $\mathcal{PGF} \subseteq \mathcal{DP}$ over any ring $R$.
 \end{corollary}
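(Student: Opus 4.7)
The plan is to recognize this corollary as an immediate specialization of Lemma 1 rather than to construct a separate argument. By definition (item 5 of the introduction), a projectively coresolved Gorenstein flat module is the cycle of an exact complex of projectives that remains exact after tensoring with every injective right $R$-module. Comparing this with item 6, which defines $\mathcal{PGF}_{\mathcal{B}}$ using tensor exactness against a general class $\mathcal{B}$ of right modules, we see that $\mathcal{PGF}$ is literally $\mathcal{PGF}_{Inj}$, where $Inj$ is the class of injective right $R$-modules.

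Once this identification is in place, I would simply invoke Lemma 1 with $\mathcal{B} = Inj$. The hypothesis $\mathcal{B} \supseteq Inj$ holds trivially in this case, so the lemma yields $\mathcal{PGF}_{Inj} \subseteq \mathcal{DP}$, which is the desired statement $\mathcal{PGF} \subseteq \mathcal{DP}$.

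There is no real obstacle here: the corollary is a one-line consequence of the lemma, and the only thing worth stating explicitly in the proof is the equality $\mathcal{PGF} = \mathcal{PGF}_{Inj}$ so that the reader can see why Lemma 1 applies. No further appeal to the structure of PGF modules (for example, to the fact from \cite{saroch:18:gor} that $\mathcal{PGF} \subseteq \mathcal{GP}$) is needed.
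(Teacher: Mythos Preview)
Your proposal is correct and matches the paper's own proof exactly: the paper simply writes ``By Lemma 1, for $\mathcal{B}$ being the class of right injective modules.'' Your additional remark that $\mathcal{PGF} = \mathcal{PGF}_{Inj}$ just makes explicit the identification the paper leaves implicit.
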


\begin{proof} By Lemma 1, for $\mathcal{B}$ being the class of right injective modules.
\end{proof}

Thus we have that over any ring $R$, $\mathcal{GP}_{ac} \subseteq \mathcal{PGF} \subseteq \mathcal{DP} \subseteq \mathcal{GP}$. So, if $\mathcal{PGF} = \mathcal{GP}$, then we have that $\mathcal{PGF} = \mathcal{DP} = \mathcal{GP}$.



\begin{lemma}
If $\mathcal{B}$ is semi-definable and $\mathcal{B} \supseteq Inj$ then $\mathcal{PGF}_{\mathcal{B}}  = \mathcal{DP} \bigcap \mathcal{GF}_{\mathcal{B}}$.
\end{lemma}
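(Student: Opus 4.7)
The plan is to prove both inclusions.

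The inclusion $\mathcal{PGF}_{\mathcal{B}} \subseteq \mathcal{DP} \cap \mathcal{GF}_{\mathcal{B}}$ is immediate: if $M \in \mathcal{PGF}_{\mathcal{B}}$, Lemma 1 yields $M \in \mathcal{DP}$, and the defining exact complex of projectives for $M$ is \emph{a fortiori} an exact complex of flats that remains exact after tensoring with any $B \in \mathcal{B}$, so $M \in \mathcal{GF}_{\mathcal{B}}$ as well.

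For the reverse inclusion let $M \in \mathcal{DP} \cap \mathcal{GF}_{\mathcal{B}}$ and fix a Ding projective complex $P$ of $M$, i.e.\ an exact complex of projectives with $M = Z_{0}P$ and $Hom(P,F)$ exact for every flat $F$. It suffices to show $P \otimes B$ is exact for every $B \in \mathcal{B}$, equivalently that $Tor_{1}(B, Z_{j}P) = 0$ for every $j \in \mathbb{Z}$ and $B \in \mathcal{B}$. For the syzygies ($j \geq 0$) this follows by dimension-shifting through the projective resolution part of $P$, using $Tor_{i}(B,M)=0$ for $i \geq 1$, a consequence of $M \in \mathcal{GF}_{\mathcal{B}}$.

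The cosyzygies are handled by induction on $i \geq 0$, with the stronger inductive hypothesis that $Z_{-i}P \in \mathcal{GF}_{\mathcal{B}}$ (the base $i=0$ being $M$ itself). Given $Z_{-i}P \in \mathcal{GF}_{\mathcal{B}}$, fix a $\mathcal{GF}_{\mathcal{B}}$-embedding $Z_{-i}P \hookrightarrow L$ into a flat with $L / Z_{-i}P \in \mathcal{GF}_{\mathcal{B}}$. Since $Z_{-i-1}P \in \mathcal{DP}$ and $L$ is flat, $Ext^{1}(Z_{-i-1}P, L) = 0$, so the embedding extends to a map $P_{-i} \to L$. Tensoring with $B$, the composition $B \otimes Z_{-i}P \to B \otimes P_{-i} \to B \otimes L$ agrees with $B$ tensored with the monomorphism $Z_{-i}P \hookrightarrow L$, which is injective because $Tor_{1}(B, L/Z_{-i}P) = 0$. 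Hence $B \otimes Z_{-i}P \hookrightarrow B \otimes P_{-i}$, giving $Tor_{1}(B, Z_{-i-1}P) = 0$. To upgrade this to $Z_{-i-1}P \in \mathcal{GF}_{\mathcal{B}}$, form the pushout $E = P_{-i} \sqcup_{Z_{-i}P} L$, which sits in two short exact sequences $0 \to P_{-i} \to E \to L/Z_{-i}P \to 0$ and $0 \to L \to E \to Z_{-i-1}P \to 0$. The first exhibits $E$ as an extension of $L/Z_{-i}P \in \mathcal{GF}_{\mathcal{B}}$ by the projective $P_{-i}$, hence $E \in \mathcal{GF}_{\mathcal{B}}$ by extension-closure; the second then displays $Z_{-i-1}P$ as the quotient of $E \in \mathcal{GF}_{\mathcal{B}}$ by the flat submodule $L$, so closure of $\mathcal{GF}_{\mathcal{B}}$ under quotients by flat submodules yields $Z_{-i-1}P \in \mathcal{GF}_{\mathcal{B}}$, completing the induction.

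The hard part is not the lifting/tensoring argument itself but the two required closure properties of $\mathcal{GF}_{\mathcal{B}}$---closure under extensions and under cokernels of flat submodules---which are the technical outputs of the cotorsion pair machinery developed in \cite{EIP} under the semi-definable assumption on $\mathcal{B}$ with $\mathcal{B} \supseteq Inj$. Assuming those closures, the induction propagates and the lemma follows.
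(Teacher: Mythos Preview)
Your argument for the reverse inclusion has a genuine gap. The closure property you invoke at the end of the inductive step---that $\mathcal{GF}_{\mathcal{B}}$ is closed under cokernels of monomorphisms from flat modules---is simply false, and is not among the results of \cite{EIP}. Over $\mathbb{Z}$ (where $Inj$ is semi-definable and $\mathcal{GF}=Flat$) the sequence $0\to 2\mathbb{Z}\to\mathbb{Z}\to\mathbb{Z}/2\to 0$ has a flat kernel and a $\mathcal{GF}$-middle term, yet $\mathbb{Z}/2\notin\mathcal{GF}$. So from $0\to L\to E\to Z_{-i-1}P\to 0$ with $L$ flat and $E\in\mathcal{GF}_{\mathcal{B}}$ you cannot conclude $Z_{-i-1}P\in\mathcal{GF}_{\mathcal{B}}$; the induction stalls after producing $Tor_1(B,Z_{-1}P)=0$. (One can repair the scheme by weakening the inductive hypothesis to ``$Z_{-i}P$ embeds in a module of \emph{finite flat dimension} with $\mathcal{GF}_{\mathcal{B}}$ cokernel'': the lifting still works by Lemma~6, and then $E\hookrightarrow F$ with $F$ flat gives $Z_{-i-1}P\hookrightarrow F/L$ with $F/L$ of finite flat dimension and $(F/L)/Z_{-i-1}P=F/E\in\mathcal{GF}_{\mathcal{B}}$. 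But this is not what you wrote.)

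The paper's proof avoids this complex-level analysis entirely. It uses completeness of the cotorsion pair $(\mathcal{PGF}_{\mathcal{B}},\mathcal{PGF}_{\mathcal{B}}^{\perp})$ to write $0\to A\to D\to M\to 0$ with $D\in\mathcal{PGF}_{\mathcal{B}}$ and $A\in\mathcal{PGF}_{\mathcal{B}}^{\perp}$; resolving-closure of $\mathcal{GF}_{\mathcal{B}}$ forces $A\in\mathcal{GF}_{\mathcal{B}}\cap\mathcal{PGF}_{\mathcal{B}}^{\perp}=Flat$, and then $M\in\mathcal{DP}$ gives $Ext^1(M,A)=0$, so the sequence splits and $M$ is a summand of $D\in\mathcal{PGF}_{\mathcal{B}}$. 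This three-line argument uses only the genuine outputs of \cite{EIP} (completeness of the $\mathcal{PGF}_{\mathcal{B}}$ pair, resolving-closure of $\mathcal{GF}_{\mathcal{B}}$, and the identification $\mathcal{GF}_{\mathcal{B}}\cap\mathcal{PGF}_{\mathcal{B}}^{\perp}=Flat$).
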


\begin{proof}
"$\subseteq$" Since $\mathcal{PGF}_{\mathcal{B}} \subseteq \mathcal{DP}$ in this case and since $\mathcal{PGF}_{\mathcal{B}} \subseteq \mathcal{GF}_{\mathcal{B}}$ (by definition, over any ring and for any class of right $R$-modules $\mathcal{B}$) it follows that $\mathcal{PGF}_{\mathcal{B}}  \subseteq \mathcal{DP} \bigcap \mathcal{GF}_{\mathcal{B}}$.\\
"$\supseteq$" Let $M \in \mathcal{DP} \bigcap \mathcal{GF}_{\mathcal{B}}$.  Since $\mathcal{B}$ is semi-definable we have that $(\mathcal{PGF}_{\mathcal{B}} , \mathcal{PGF}_{\mathcal{B}}^\perp)$ is a complete cotorsion pair (by \cite{EIP}, Theorem 2.13). Thus there exists a short exact sequence $0 \rightarrow A \rightarrow D \rightarrow M \rightarrow 0$ with $D \in \mathcal{PGF}_{\mathcal{B}}$, and $A \in \mathcal{PGF}_{\mathcal{B}}^\perp$. By \cite{EIP}, Corollary 2.20, $\mathcal{GF}_{\mathcal{B}}$ is closed under kernels of epimorphisms when $\mathcal{B}$ is semi definable. Since $M \in \mathcal{GF}_{\mathcal{B}}$, and $D \in \mathcal{PGF}_{\mathcal{B}} \subseteq \mathcal{GF}_{\mathcal{B}}$, it follows that $A \in \mathcal{GF}_{\mathcal{B}}$.
So $A \in \mathcal{GF}_{\mathcal{B}} \bigcap \mathcal{PGF}_{\mathcal{B}}^\perp = Flat$ (\cite{EIP}, Theorem 2.14). Since $M \in \mathcal{DP}$ and $A \in Flat$, it follows that $Ext^1(M,A) = 0$. Thus $D \simeq A \oplus M$ and since $D \in \mathcal{PGF}_{\mathcal{B}}$, we have that $M \in \mathcal{PGF}_{\mathcal{B}}$.
\end{proof}
In particular, when $\mathcal{B} = Inj$  we obtain the following:

\begin{lemma}
Over any ring $R$, $\mathcal{PGF} = \mathcal{DP} \bigcap \mathcal{GF}$.
\end{lemma}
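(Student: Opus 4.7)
The statement is, as the surrounding text suggests, meant to be an immediate specialisation of Lemma 2 to the case $\mathcal{B}=Inj$. My plan is therefore to verify that $\mathcal{B}=Inj$ satisfies the hypotheses of Lemma 2, and then to identify the relative classes $\mathcal{PGF}_{Inj}$ and $\mathcal{GF}_{Inj}$ with the absolute classes $\mathcal{PGF}$ and $\mathcal{GF}$ respectively.

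First, the identification of classes is built into the definitions given in the introduction: $\mathcal{GF}$ is defined as the class of cycles of exact flat complexes that remain exact after tensoring with every injective right $R$-module, which is exactly $\mathcal{GF}_{Inj}$; likewise $\mathcal{PGF}$ is the version with projective complexes, so $\mathcal{PGF}=\mathcal{PGF}_{Inj}$. Trivially $Inj\supseteq Inj$, so only the semi-definability of $Inj$ requires comment.

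The main (and only nontrivial) step is therefore to check that $Inj$ is semi-definable, i.e.\ that it is closed under arbitrary direct products and contains an elementary cogenerator of its definable closure $\langle Inj\rangle$. Closure of $Inj$ under direct products is standard (a product of injectives is injective over any ring). For the cogenerator, I would argue that every injective module is pure-injective, so to exhibit an elementary cogenerator it suffices to produce an injective $E_0\in Inj$ such that every $D\in\langle Inj\rangle$ purely embeds in some power of $E_0$. Taking $E_0$ to be the injective envelope of $\bigoplus_I R/I$, where $I$ ranges over the left ideals of $R$ (or any injective cogenerator of $R\text{-Mod}$), gives an injective module into powers of which every module embeds, and since a submodule of a pure-injective is a pure submodule iff the inclusion is pure, standard arguments (together with the fact that every module in $\langle Inj\rangle$ is pure-embeddable into a direct product of members of $Inj$, hence into a power of $E_0$) give the required pure embedding.

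Once $Inj$ is known to be semi-definable, Lemma 2 applies directly and gives
\[
\mathcal{PGF}=\mathcal{PGF}_{Inj}=\mathcal{DP}\cap\mathcal{GF}_{Inj}=\mathcal{DP}\cap\mathcal{GF},
\]
which is the claim. The only delicate point in the plan is the verification that an elementary cogenerator of $\langle Inj\rangle$ can be chosen inside $Inj$; everything else is bookkeeping.
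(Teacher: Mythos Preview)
Your proposal is correct and follows exactly the approach the paper takes: Lemma~3 is stated immediately after Lemma~2 with the phrase ``In particular, when $\mathcal{B}=Inj$ we obtain the following,'' and no separate proof is given. The paper simply takes for granted (from \cite{EIP} and \cite{saroch:18:gor}) that $Inj$ is semi-definable, whereas you spell out a plan to verify this; that extra care is fine but not something the paper itself supplies.
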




\begin{corollary}
Over any ring $R$, $\mathcal{PGF} = \mathcal{DP}$ if and only if $\mathcal{DP} \subseteq \mathcal{GF}$.
\end{corollary}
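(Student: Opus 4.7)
The plan is to derive this as an immediate consequence of Lemma 3, which states that $\mathcal{PGF} = \mathcal{DP} \cap \mathcal{GF}$ over any ring $R$. So the whole argument reduces to a set-theoretic manipulation of this equality.

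For the forward direction, I would assume $\mathcal{PGF} = \mathcal{DP}$. Since $\mathcal{PGF} \subseteq \mathcal{GF}$ holds over any ring by the very definition of projectively coresolved Gorenstein flat modules (every $\mathcal{PGF}$ module arises from an exact complex of projectives that remains exact after tensoring with any injective, hence in particular is a cycle of such a complex of flat modules), the assumption immediately gives $\mathcal{DP} \subseteq \mathcal{GF}$.

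For the backward direction, I would assume $\mathcal{DP} \subseteq \mathcal{GF}$. Then $\mathcal{DP} \cap \mathcal{GF} = \mathcal{DP}$, and by Lemma 3 this equals $\mathcal{PGF}$. Combined with the containment $\mathcal{PGF} \subseteq \mathcal{DP}$ supplied by Corollary 1, equality follows.

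There is essentially no obstacle here; the content of the corollary lies entirely in Lemma 3 (and the previously established inclusion $\mathcal{PGF} \subseteq \mathcal{DP}$). The only thing to be careful about is quoting the right ingredients, namely Lemma 3 for the intersection identity and either Corollary 1 or the definitional inclusion $\mathcal{PGF} \subseteq \mathcal{GF}$ to handle the two directions symmetrically.
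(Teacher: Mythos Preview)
Your proposal is correct and matches the paper's approach exactly: the corollary is stated immediately after Lemma 3 with no separate proof, since it is an obvious set-theoretic consequence of the identity $\mathcal{PGF} = \mathcal{DP} \cap \mathcal{GF}$ together with the standing inclusion $\mathcal{PGF} \subseteq \mathcal{DP}$.
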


We also have the following:\\
\begin{lemma}
Over any ring $R$, $\mathcal{DP} = \mathcal{GP}$ if and only if $Flat \subseteq \mathcal{GP}^\perp$.
\end{lemma}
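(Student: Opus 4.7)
The plan is to prove both directions by unpacking the defining complexes and running the standard splicing argument. The direction $\Rightarrow$ is essentially immediate from the definition of a Ding projective module; the direction $\Leftarrow$ requires one to upgrade a Gorenstein projective complex to a Ding projective one using the hypothesis.

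For the forward direction, assume $\mathcal{DP} = \mathcal{GP}$, and take an arbitrary $M \in \mathcal{GP}$ and an arbitrary flat module $F$. By hypothesis $M \in \mathcal{DP}$, so there exists an exact complex of projectives $P = \ldots \to P_1 \to P_0 \to P_{-1} \to \ldots$ with $M$ a cycle such that $\Hom(P, F)$ is exact. Truncating $P$ just above $M$ gives a projective resolution of $M$, and the exactness of $\Hom(P, F)$ at the relevant slot forces $Ext^1(M, F) = 0$. As $M$ and $F$ were arbitrary, $Flat \subseteq \mathcal{GP}^\perp$.

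For the reverse direction, assume $Flat \subseteq \mathcal{GP}^\perp$; since $\mathcal{DP} \subseteq \mathcal{GP}$ always, it suffices to show $\mathcal{GP} \subseteq \mathcal{DP}$. Let $M \in \mathcal{GP}$ and pick an exact complex of projectives $P$ with $M$ a cycle such that $\Hom(P, Q)$ is exact for every projective $Q$. Shifting $P$ shows that each cycle $Z_j P$ is itself Gorenstein projective, so by hypothesis $Ext^1(Z_j P, F) = 0$ for every flat module $F$ and every $j$. Applying $\Hom(-, F)$ to the short exact sequences $0 \to Z_{j+1} P \to P_{j+1} \to Z_j P \to 0$ and using the induced long exact sequences of $Ext$, the vanishing of $Ext^1$ yields short exact sequences $0 \to \Hom(Z_j P, F) \to \Hom(P_{j+1}, F) \to \Hom(Z_{j+1} P, F) \to 0$. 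Splicing these back together reconstructs $\Hom(P, F)$ as an exact complex, so $M \in \mathcal{DP}$.

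There is no substantive obstacle; the argument is a routine cycles-versus-hom exchange for totally acyclic complexes of projectives. The only point worth flagging is the verification that shifting a complex witnessing $M \in \mathcal{GP}$ still yields such a witness, so that every cycle lies in $\mathcal{GP}$ and can be fed into the hypothesis $Flat \subseteq \mathcal{GP}^\perp$; this is immediate from the symmetric form of the definition.
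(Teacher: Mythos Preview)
Your proof is correct and follows essentially the same route as the paper's: the forward direction uses that $Flat \subseteq \mathcal{DP}^\perp$ (which the paper states as ``by definition'' and you unpack explicitly via the witnessing complex), and the reverse direction shows that the totally acyclic complex witnessing $M \in \mathcal{GP}$ has all cycles in $\mathcal{GP}$, so the hypothesis forces $Ext^1(Z_jP,F)=0$ and hence $\Hom(P,F)$ is exact. Your version simply spells out the splicing of short exact sequences that the paper compresses into one line.
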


\begin{proof}
One implication is immediate ("$\Rightarrow$"), since, by definition, $Flat \subseteq \mathcal{DP}^\perp$. So if $\mathcal{DP} = \mathcal{GP}$ then $Flat \subseteq \mathcal{DP}^\perp = \mathcal{GP}^\perp$.

"$\Leftarrow$" We have $\mathcal{DP} \subseteq \mathcal{GP}$ over any ring. Let $M \in \mathcal{GP}$. Then there exists an exact complex of projectives $P = \ldots \rightarrow P_1 \rightarrow P_0 \rightarrow P_{-1} \rightarrow \ldots$ with $M = Z_0(P)$ and $Z_j(P) \in \mathcal{GP}$ for all $j$. Since $Flat \subseteq \mathcal{GP}^\perp$, we have $Ext^1(Z_j(P), Flat) =0$. So $Hom(P, F)$ is exact for any flat $R$-module $F$. By definition, $M \in \mathcal{DP}$.
\end{proof}

We recall that a ring $R$ is called \emph{left $n$-perfect} if every flat $R$-module has projective dimension $\le n$. Since in this case $Flat \subseteq \mathcal{GP}^\bot$, Lemma 4 gives the following:\\
\begin{corollary}
If $R$ is a left $n$-perfect ring then $\mathcal{DP} = \mathcal{GP}$.
\end{corollary}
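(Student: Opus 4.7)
The plan is to reduce to Lemma 4 by establishing the inclusion $Flat \subseteq \mathcal{GP}^\perp$, which is exactly the criterion provided there. The indication in the text preceding the corollary (``Since in this case $Flat \subseteq \mathcal{GP}^\bot$\ldots'') confirms this is the intended route.

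First I would unpack the definition: if $R$ is left $n$-perfect, then every flat left $R$-module $F$ admits a finite projective resolution $0 \to P_n \to P_{n-1} \to \ldots \to P_0 \to F \to 0$. Next I would argue that for any Gorenstein projective module $G$ and any projective $P$, one has $Ext^i(G, P) = 0$ for every $i \geq 1$. This follows directly from the definition of $\mathcal{GP}$: writing $G = Z_0(P^\bullet)$ as the $0$-cycle of a totally acyclic complex of projectives $P^\bullet$, the $i$-th syzygy $Z_i(P^\bullet)$ is again Gorenstein projective, and a standard dimension shift gives $Ext^i(G,P) = Ext^1(Z_{i-1}(P^\bullet), P) = 0$ since $Hom(P^\bullet, P)$ is exact.

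Then I would perform dimension shifting along the finite projective resolution of $F$. Breaking the resolution into short exact sequences and applying $Hom(G, -)$, one obtains
\[
Ext^1(G, F) \cong Ext^2(G, K_0) \cong \ldots \cong Ext^{n+1}(G, P_n) = 0,
\]
where $K_i$ denotes the $i$-th syzygy. This shows $F \in \mathcal{GP}^\perp$, and since $F$ was an arbitrary flat module, we conclude $Flat \subseteq \mathcal{GP}^\perp$. Lemma 4 then yields $\mathcal{DP} = \mathcal{GP}$.

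There is essentially no obstacle here: the only point requiring attention is the vanishing $Ext^{i}(G,P) = 0$ for all $i \geq 1$ and all projective $P$, which follows from totally acyclicity of the defining complex of a Gorenstein projective module. The rest is a routine dimension shift and an invocation of Lemma 4.
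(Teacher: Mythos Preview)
Your proof is correct and follows exactly the route the paper indicates: establish $Flat \subseteq \mathcal{GP}^\perp$ via dimension shifting along a finite projective resolution of a flat module, then invoke Lemma 4. The paper states this corollary with only the one-line justification ``Since in this case $Flat \subseteq \mathcal{GP}^\bot$, Lemma 4 gives the following,'' so you have simply supplied the standard details behind that assertion.
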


Another consequence of Lemma 4 is the following:\\

\begin{corollary}
If $Flat \subseteq \mathcal{GP}^\perp$, then $\mathcal{PGF} = \mathcal{GP} \bigcap \mathcal{GF}$. In particular, this is the case when $R$ is a left $n$-perfect ring.
\end{corollary}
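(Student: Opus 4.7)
The plan is to derive the identity by composing Lemma 4 with Lemma 3. First, I would observe that the hypothesis $Flat \subseteq \mathcal{GP}^\perp$ is exactly the right-hand side of the equivalence established in Lemma 4, so it immediately yields $\mathcal{DP} = \mathcal{GP}$. Next, I would invoke Lemma 3 (the one stating $\mathcal{PGF} = \mathcal{DP} \bigcap \mathcal{GF}$ over any ring $R$), which applies unconditionally. Substituting the equality $\mathcal{DP} = \mathcal{GP}$ into this identity produces $\mathcal{PGF} = \mathcal{GP} \bigcap \mathcal{GF}$, which is the first assertion.

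For the "in particular" clause I would argue as in the paragraph preceding Corollary 3: if $R$ is left $n$-perfect then every flat $R$-module has projective dimension at most $n$, and a standard dimension-shifting argument shows that any Gorenstein projective module $G$ satisfies $Ext^{1}(G,F) = Ext^{n+1}(G,P_{\bullet}) = 0$ for every flat $F$ (using the long exact $Ext$ sequence derived from a finite projective resolution of $F$, together with the fact that $Ext^{i}(G,P) = 0$ for all $i \ge 1$ and all projective $P$, since $G$ sits as a cycle in a totally acyclic complex of projectives). Hence $Flat \subseteq \mathcal{GP}^\perp$ and the first half applies.

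There is essentially no obstacle here; the proof is a two-line synthesis of Lemma 3 and Lemma 4, with the $n$-perfect case following from the standard $Ext$-orthogonality of Gorenstein projectives against modules of finite projective dimension. The only small subtlety worth being explicit about is this last $Ext$-vanishing, which is folklore but is the only ingredient that is not literally quoted from a named result earlier in the paper.
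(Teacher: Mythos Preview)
Your proposal is correct and follows exactly the paper's approach: apply Lemma 4 to get $\mathcal{DP} = \mathcal{GP}$, then substitute into Lemma 3 to obtain $\mathcal{PGF} = \mathcal{GP}\cap\mathcal{GF}$. Your justification of the $n$-perfect case via dimension shifting is actually more explicit than the paper, which simply asserts $Flat \subseteq \mathcal{GP}^\bot$ for such rings in the paragraph preceding Corollary 3.
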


\begin{proof}
If $Flat \subseteq \mathcal{GP}^\bot$ then, by Lemma 4, we have $\mathcal{DP} = \mathcal{GP}$. Then, by Lemma 3, $\mathcal{PGF} =  \mathcal{DP} \bigcap \mathcal{GF} = \mathcal{GP} \bigcap \mathcal{GF}$.
\end{proof}

\begin{lemma}
Assume $\mathcal{B}$ is a semi-definable class of right $R$-modules. If a left $R$-module $M$ has Gorenstein $\mathcal{B}$ flat dimension $n < \infty$ then for any partial projective resolution of $M$, $0 \rightarrow G \rightarrow P_{n-1} \rightarrow \ldots \rightarrow P_0 \rightarrow M \rightarrow 0$, we have that $G$ is a Gorenstein $\mathcal{B}$ flat module.
\end{lemma}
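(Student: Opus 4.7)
The plan is induction on $n$. For $n = 0$ the sequence degenerates to $0 \to G \to M \to 0$, forcing $G \cong M$, which lies in $\mathcal{GF}_{\mathcal{B}}$ by hypothesis. For $n \geq 1$, set $K = \ker(P_0 \to M)$; then $0 \to G \to P_{n-1} \to \cdots \to P_1 \to K \to 0$ is a partial projective resolution of $K$ of length $n-1$, so it suffices to show that $K$ has Gorenstein $\mathcal{B}$-flat dimension $\leq n-1$, after which the induction hypothesis applied to this shorter resolution delivers $G \in \mathcal{GF}_{\mathcal{B}}$.

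To bound the dimension of $K$, I would choose a Gorenstein $\mathcal{B}$-flat resolution $0 \to H_n \to \cdots \to H_0 \to M \to 0$ and put $L = \ker(H_0 \to M)$, which has Gorenstein $\mathcal{B}$-flat dimension $\leq n-1$ via the truncated resolution $0 \to H_n \to \cdots \to H_1 \to L \to 0$. Form the pullback $X$ of $P_0 \to M$ along $H_0 \to M$: it fits into two short exact sequences, $0 \to L \to X \to P_0 \to 0$ (which splits because $P_0$ is projective, so $X \cong L \oplus P_0$) and $0 \to K \to X \to H_0 \to 0$, yielding $0 \to K \to L \oplus P_0 \to H_0 \to 0$. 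Here $L \oplus P_0$ has Gorenstein $\mathcal{B}$-flat dimension $\leq n-1$ because $P_0 \in Flat \subseteq \mathcal{GF}_{\mathcal{B}}$, and $H_0 \in \mathcal{GF}_{\mathcal{B}}$.

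What remains is an auxiliary dimension-shifting claim: in any short exact sequence $0 \to A \to B \to C \to 0$ with $C \in \mathcal{GF}_{\mathcal{B}}$ and $B$ of Gorenstein $\mathcal{B}$-flat dimension $\leq d$, the module $A$ again has Gorenstein $\mathcal{B}$-flat dimension $\leq d$. To prove this, take a Gorenstein $\mathcal{B}$-flat resolution $0 \to Y_d \to \cdots \to Y_0 \to B \to 0$ and let $Z$ be the kernel of the composite surjection $Y_0 \to B \to C$. Then $0 \to Z \to Y_0 \to C \to 0$ has both $Y_0$ and $C$ in $\mathcal{GF}_{\mathcal{B}}$, so $Z \in \mathcal{GF}_{\mathcal{B}}$ by Corollary 2.20 of \cite{EIP}. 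Setting $K_0 = \ker(Y_0 \to B)$, which has Gorenstein $\mathcal{B}$-flat dimension $\leq d-1$ via the resolution $0 \to Y_d \to \cdots \to Y_1 \to K_0 \to 0$, the snake diagram yields an exact sequence $0 \to K_0 \to Z \to A \to 0$, and concatenating a length-$(d-1)$ Gorenstein $\mathcal{B}$-flat resolution of $K_0$ with this sequence produces a length-$d$ Gorenstein $\mathcal{B}$-flat resolution of $A$. Applying the claim with $d = n-1$ to the short exact sequence from the previous paragraph completes the inductive step. The main subtlety is managing these nested dimension drops cleanly, but everything ultimately rests on the closure of $\mathcal{GF}_{\mathcal{B}}$ under kernels of epimorphisms, which is guaranteed by semi-definability of $\mathcal{B}$.
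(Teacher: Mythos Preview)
Your proof is correct, but it takes a different route from the paper's. The paper argues in one shot via the comparison theorem and mapping cone: it lifts the identity on $M$ to a chain map from the projective resolution $0 \to G \to P_{n-1} \to \cdots \to P_0 \to M \to 0$ into a chosen Gorenstein $\mathcal{B}$-flat resolution $0 \to G_n \to \cdots \to G_0 \to M \to 0$, forms the (exact) mapping cone, and after factoring out the subcomplex $0 \to M \xrightarrow{=} M \to 0$ obtains a single exact sequence
\[
0 \to G \to G_n \oplus P_{n-1} \to \cdots \to G_1 \oplus P_0 \to G_0 \to 0
\]
of Gorenstein $\mathcal{B}$-flat modules, from which $G \in \mathcal{GF}_{\mathcal{B}}$ follows by repeatedly applying closure under kernels of epimorphisms (\cite{EIP}, Corollary 2.20). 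Your argument instead reduces by induction, comparing the first syzygies $K$ and $L$ of the two resolutions through a pullback square, and then isolates the needed dimension drop for $K$ in a separate auxiliary lemma about short exact sequences. Both approaches ultimately pivot on the same resolving property of $\mathcal{GF}_{\mathcal{B}}$; the paper's mapping-cone version is shorter and avoids the nested induction and auxiliary claim, while your version is a bit more explicit about why each individual step goes through.
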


\begin{proof}
By hypothesis there is an exact complex $0 \rightarrow G_n \rightarrow G_{n-1} \rightarrow \ldots \rightarrow G_0 \rightarrow M \rightarrow 0$ with all $G_i$ Gorenstein $\mathcal{B}$ flat modules. Since each $P_j$ is projective, there is a commutative diagram\\

\[
\begin{diagram}
\node{0}\arrow{e}\node{G}\arrow{s}\arrow{e}\node{P_{n-1}}\arrow{s}\arrow{e}\node{...}\arrow{e}\node{P_0}\arrow{s,r}{l_0}\arrow{e,t}{d_0}\node{M}\arrow{s,=}\arrow{e}\node{0}\\
\node{0}\arrow{e}\node{G_n}\arrow{e}\node{G_{n-1}}\arrow{e}\node{...}\arrow{e}\node{G_0}\arrow{e,b}{f_0}\node{M}\arrow{e}\node{0}
\end{diagram}
\]

Both rows are exact, so the mapping cone is also exact. It has an exact subcomplex $0 \rightarrow M \xrightarrow{=} M \rightarrow 0$ (see the diagram below, where the maps are: $\alpha(x,y) = (f_1(x) - l_0 (y), d_0(y))$, $\pi(x,y)=x$, $\beta(x,y) = f_1(x) - l_0(y)$, $p(x,y) = f_0(x) + y$, and $\theta(x) = x$).\\

\[
\dgARROWLENGTH=0.5em
\begin{diagram}
\node[4]{0}\arrow{e}\node{M}\arrow{s,r}{e}\arrow{e,=}\node{M}\arrow{s,r}{\theta}\arrow{e}\node{0}\\
\node{G}\arrow{s,=}\arrow{e}\node{G_n\oplus P_{n-1}}\arrow{s,=}\arrow{e}\node{...}\arrow{e,t}{\delta}\node{G_1\oplus P_0}\arrow{s,=}\arrow{e,t}{\alpha}\node{G_0\oplus M}\arrow{s,r}{\pi}\arrow{e,t}{p}\node{M}\arrow{s}\arrow{e}\node{0}\\
\node{G}\arrow{e}\node{G_n\oplus P_{n-1}}\arrow{e}\node{...}\arrow{e}\node{G_1\oplus P_0}\arrow{e,t}{\beta}\node{G_0}\arrow{e}\node{0}
\end{diagram}
\]

After factoring out the exact subcomplex $0 \rightarrow M \xrightarrow{=} M \rightarrow 0$ we obtain an exact sequence $0 \rightarrow G \rightarrow G_n \oplus P_{n-1} \rightarrow G_{n-1} \oplus P_{n-2} \rightarrow \ldots \rightarrow G_1 \oplus P_0 \rightarrow G_0 \rightarrow 0$, with all the modules $G_i$ Gorenstein $\mathcal{B}$-flat and all $P_j$ projective.
Let $L = Ker (G_{n-1} \oplus P_{n-2} \rightarrow G_{n-2} \oplus P_{n-3})$. The exact sequence $0 \rightarrow L \rightarrow G_{n-1} \oplus P_{n-2} \rightarrow \ldots \rightarrow G_1 \oplus P_0 \rightarrow G_0 \rightarrow 0$ with all $P_j$ projective and all $G_i$ Gorenstein $\mathcal{B}$ flat gives that $L$ is Gorenstein $\mathcal{B}$ flat. Then the exact sequence $0 \rightarrow G \rightarrow G_n \oplus P_{n-1} \rightarrow L \rightarrow 0$ with $G_n \oplus P_{n-1}$ and $L$ Gorenstein $\mathcal{B}$ flat modules gives that $G$ is Gorenstein $\mathcal{B}$ flat (by \cite{EIP}, Corollary 2.20, the class $\mathcal{GF}_{\mathcal{B}}$ is closed under kernels of epimorphisms).
\end{proof}

\begin{lemma}
Every module of finite flat dimension is in $\mathcal{DP}^\bot$.
\end{lemma}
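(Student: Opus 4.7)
The plan is to prove the stronger statement that $\ext{}{i}(M,N) = 0$ for every Ding projective module $M$, every module $N$ of finite flat dimension, and every $i \geq 1$, and then specialize to $i=1$. I would proceed by induction on the flat dimension $n$ of $N$.

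For the base case $n = 0$, so $N = F$ is flat, I would exploit the definition of Ding projectivity as follows. Pick a totally acyclic (in the Ding sense) complex of projectives $P = \ldots \to P_1 \to P_0 \to P_{-1} \to \ldots$ with $M = Z_0(P)$ and $\Hom(P,F)$ exact for every flat $F$. By truncation all the cycles $Z_j(P)$ are again Ding projective, so it suffices to show $\ext{}{1}(M,F)=0$, since then dimension shifting through the short exact sequences $0 \to Z_{i}(P) \to P_{i} \to Z_{i-1}(P) \to 0$ yields $\ext{}{i}(M,F) = \ext{}{1}(Z_{i-1}(P),F) = 0$ for all $i \geq 1$. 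To see $\ext{}{1}(M,F)=0$, take any $\phi : Z_1(P) \to F$; composing with the surjection $P_2 \twoheadrightarrow Z_1(P)$ produces an element of $\Ker(\Hom(P_2,F)\to\Hom(P_3,F))$, which by exactness of $\Hom(P,F)$ lifts to some $\psi : P_1 \to F$, and a diagram chase shows $\psi|_{Z_1(P)} = \phi$. Hence every map $\Omega M \to F$ extends to $P_1 \to F$, so $\ext{}{1}(M,F)=0$.

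For the inductive step, suppose the result is known for all modules of flat dimension $< n$ and let $N$ have flat dimension $n \geq 1$. Choose a short exact sequence
\[
0 \to N' \to F \to N \to 0
\]
with $F$ flat and $N'$ of flat dimension $n-1$. Applying $\Hom(M,-)$ yields the long exact sequence
\[
\ext{}{i}(M,F) \to \ext{}{i}(M,N) \to \ext{}{i+1}(M,N') \to \ext{}{i+1}(M,F).
\]
The outer terms vanish by the base case (applied to $F$), while the middle-right term vanishes by the inductive hypothesis applied to $N'$. Consequently $\ext{}{i}(M,N)=0$ for every $i \geq 1$. Taking $i=1$ yields $N \in \mathcal{DP}^{\bot}$.

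There is no real obstacle here; the only delicate point is to verify the base case by hand, since it is not literally the defining condition (which gives exactness of $\Hom(P,F)$, not the vanishing of $\ext{}{1}$). Once that identification is made, the induction is completely mechanical.
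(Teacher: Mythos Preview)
Your proof is correct and follows essentially the same approach as the paper: induction on the flat dimension, with the base case (flat $N$) handled via the defining acyclicity condition and then extended to all $\ext{}{i}$ by dimension shifting along the syzygies, and the inductive step via the long exact sequence from a short exact sequence $0 \to N' \to F \to N \to 0$. The only cosmetic difference is organizational: the paper first isolates the implication ``$F \in \mathcal{DP}^\bot \Rightarrow \ext{}{i}(D,F)=0$ for all $i$'' as a separate preliminary step and then asserts the base case ``$N$ flat $\Rightarrow N \in \mathcal{DP}^\bot$'' as immediate from the definition, whereas you fold both into a single base case and spell out explicitly why exactness of $\Hom(P,F)$ forces $\ext{}{1}(M,F)=0$.
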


\begin{proof}
- We show first that if $F \in \mathcal{DP}^\bot$ then we have $Ext^i(D,F)=0$ for all $i \ge 1$, for all $D \in \mathcal{DP}$. \\
By definition there exists an exact sequence $0 \rightarrow D' \rightarrow P \rightarrow D \rightarrow 0$ with $D' \in \mathcal{DP}$ and with $P \in Proj$. This gives an exact sequence $0 = Ext^1(D', F) \rightarrow Ext^2(D,F) \rightarrow Ext^2(P,F) =0$. This shows that $Ext^2(D,F)=0$. Similarly, $Ext^i(D,F)=0$ for all $i \ge 1$, for any $D \in \mathcal{DP}$.\\
- We prove that if $N$ has flat dimension $n < \infty$ then $Ext^1(D,N) =0$ for any $D \in \mathcal{DP}$. \\Proof by induction on $n$. If $n=0$ then $N$ is flat, so $N \in \mathcal{DP}^\bot$ by definition. For the case $n \ge 1$, consider the exact sequence $0 \rightarrow X \rightarrow F_0 \rightarrow N \rightarrow 0$ with $F_0$ flat and with $f.d. X \le n-1$. Let $D$ be any Ding projective module. By induction hypothesis, we have that $X \in \mathcal{DP}^\bot$. By the above, $Ext^i(D,X)=0$ for all $i \ge 1$. The long exact sequence $0 = Ext^1(D,F_0) \rightarrow Ext^1(D, N) \rightarrow Ext^2(D,X) =0$ shows that $Ext^1(D,N)=0$ for any $D \in \mathcal{DP}$.
\end{proof}

\begin{proposition}
Assume that $\mathcal{B}$ is a semi-definable class of right $R$-modules that contains that of injective modules. The Gorenstein $\mathcal{B}$ flat dimension of a Ding projective module is either zero or infinite.
\end{proposition}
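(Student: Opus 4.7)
Suppose $M$ is a Ding projective module with finite Gorenstein $\mathcal{B}$-flat dimension $n$; the goal is to show $n=0$, i.e., $M \in \mathcal{GF}_{\mathcal{B}}$. My plan is to approximate $M$ by a module in $\mathcal{PGF}_{\mathcal{B}}$ using the complete cotorsion pair (\cite{EIP}, Theorem 2.13), to argue that the kernel of this approximation has finite flat dimension, and then to invoke Lemma 7 together with the Ding projectivity of $M$ to split the approximation, exhibiting $M$ as a direct summand of a $\mathcal{PGF}_{\mathcal{B}}$-module.

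First I would invoke (\cite{EIP}, Theorem 2.13) to conclude that $(\mathcal{PGF}_{\mathcal{B}}, \mathcal{PGF}_{\mathcal{B}}^\perp)$ is a complete cotorsion pair, producing a short exact sequence $0 \to A \to D \to M \to 0$ with $D \in \mathcal{PGF}_{\mathcal{B}}$ and $A \in \mathcal{PGF}_{\mathcal{B}}^\perp$. Since $D \in \mathcal{PGF}_{\mathcal{B}} \subseteq \mathcal{GF}_{\mathcal{B}}$ has Gorenstein $\mathcal{B}$-flat dimension zero, and $M$ has finite Gorenstein $\mathcal{B}$-flat dimension $n$, standard closure of the class of modules of finite Gorenstein $\mathcal{B}$-flat dimension under kernels of epimorphisms forces $A$ to have finite Gorenstein $\mathcal{B}$-flat dimension; call this number $m$.

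Next I would upgrade this to finite flat dimension for $A$. By Lemma 6, a partial projective resolution $0 \to G \to P_{m-1} \to \cdots \to P_0 \to A \to 0$ of length $m$ has $G \in \mathcal{GF}_{\mathcal{B}}$. The cotorsion pair $(\mathcal{PGF}_{\mathcal{B}}, \mathcal{PGF}_{\mathcal{B}}^\perp)$ is hereditary (since $\mathcal{PGF}_{\mathcal{B}}$ is closed under kernels of epimorphisms and contains the projectives), so dimension shifting of Ext yields $Ext^1(Y,G) \cong Ext^{m+1}(Y,A) = 0$ for every $Y \in \mathcal{PGF}_{\mathcal{B}}$, whence $G \in \mathcal{PGF}_{\mathcal{B}}^\perp$. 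The identification $\mathcal{GF}_{\mathcal{B}} \bigcap \mathcal{PGF}_{\mathcal{B}}^\perp = Flat$ from (\cite{EIP}, Theorem 2.14), already used in the proof of Lemma 2, then shows $G$ is flat, so $A$ has flat dimension at most $m$.

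Finally, Lemma 7 gives $A \in \mathcal{DP}^\perp$. Since $M \in \mathcal{DP}$, $Ext^1(M,A) = 0$, so the short exact sequence splits, presenting $M$ as a direct summand of $D \in \mathcal{PGF}_{\mathcal{B}}$. Because $\mathcal{PGF}_{\mathcal{B}}$ is closed under direct summands (it is the left class of a cotorsion pair), $M \in \mathcal{PGF}_{\mathcal{B}} \subseteq \mathcal{GF}_{\mathcal{B}}$, whence $n = 0$. The main technical obstacle along the way is showing that $A$ has finite \emph{flat} dimension rather than merely finite Gorenstein $\mathcal{B}$-flat dimension; this requires combining Lemma 6 with the intersection identification $\mathcal{GF}_{\mathcal{B}} \bigcap \mathcal{PGF}_{\mathcal{B}}^\perp = Flat$, connected via the heredity of the cotorsion pair.
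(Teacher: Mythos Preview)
Your argument is correct and takes a genuinely different route from the paper's. The paper works with a partial projective resolution $0\to G\to P_{n-1}\to\cdots\to P_0\to M\to 0$ of $M$ itself, observes that the syzygy $G$ is both Ding projective (since $\mathcal{DP}$ is resolving) and in $\mathcal{GF}_{\mathcal{B}}$ (by Lemma~5), hence $G\in\mathcal{PGF}_{\mathcal{B}}$ by Lemma~2. It then takes a projective coresolution $0\to G\to P'_{n-1}\to\cdots\to P'_0\to V\to 0$ with all cycles in $\mathcal{PGF}_{\mathcal{B}}$, lifts the identity on $G$ to a chain map into the original resolution, and uses the mapping cone (modulo $0\to G=G\to 0$) to produce a short exact sequence $0\to L\to P_0\oplus V\to M\to 0$ with $L$ of finite projective dimension. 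The finish is the same as yours: $L\in\mathcal{DP}^{\perp}$ by Lemma~6, the sequence splits, and $M$ is a summand of a $\mathcal{PGF}_{\mathcal{B}}$-module.

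Your approach bypasses Lemma~2 and the mapping-cone construction entirely: you approximate $M$ directly by $D\in\mathcal{PGF}_{\mathcal{B}}$ via the complete cotorsion pair, and the work shifts to showing the kernel $A\in\mathcal{PGF}_{\mathcal{B}}^{\perp}$ has finite flat dimension. That step uses heredity of $(\mathcal{PGF}_{\mathcal{B}},\mathcal{PGF}_{\mathcal{B}}^{\perp})$ to push $A$'s syzygy into $\mathcal{PGF}_{\mathcal{B}}^{\perp}$ and the identity $\mathcal{GF}_{\mathcal{B}}\cap\mathcal{PGF}_{\mathcal{B}}^{\perp}=Flat$. This is cleaner and avoids the diagram chase; the paper's version, on the other hand, makes the splitting sequence more explicit and does not need to invoke finiteness of $\mathcal{GF}_{\mathcal{B}}$-dimension for the auxiliary module $A$ (you appeal to ``standard closure under kernels of epimorphisms'' for the finite-dimension class, which is fine since $\mathcal{GF}_{\mathcal{B}}$ is resolving here, but deserves a word of justification).

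One minor point: your references to ``Lemma~6'' (partial projective resolution) and ``Lemma~7'' (finite flat dimension implies $\mathcal{DP}^{\perp}$) are off by one relative to the paper; these are Lemma~5 and Lemma~6, respectively.
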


\begin{proof}
Let $M$ be a Ding projective module which has Gorenstein $\mathcal{B}$ flat dimension $ n < \infty$. Then for any partial projective resolution of $M$, $0 \rightarrow G \rightarrow P_{n-1} \rightarrow \ldots \rightarrow P_1 \rightarrow P_0 \rightarrow M \rightarrow 0$, we have that $G \in \mathcal{GF}_{\mathcal{B}}$.\\
Since $M$ is Ding projective, each $P_i$ is projective, hence Ding projective, and since $\mathcal{DP}$ is closed under kernels of epimorphisms (by [11], Theorem 2.6), it follows that $G$ is also Ding projective. Thus $G \in \mathcal{DP} \bigcap \mathcal{GF}_{\mathcal{B}} = \mathcal{PGF}_{\mathcal{B}}$ (by Lemma 2)





So there is an exact complex $0 \rightarrow G \rightarrow P'_{n-1} \rightarrow \ldots \P'_0 \rightarrow V \rightarrow 0$ with each $P'_j$ projective and with all cycles (in particular, $V$) being $\mathcal{PGF}_{\mathcal{B}}$ modules. Since each $P_i$ is projective and $Proj \subseteq \mathcal{PGF}_{\mathcal{B}}^\bot$, we have a commutative diagram:

\[
\begin{diagram}
\node{0}\arrow{e}\node{G}\arrow{s,=}\arrow{e}\node{P'_{n-1}}\arrow{s}\arrow{e}\node{...}\arrow{e}\node{P'_0}\arrow{s}\arrow{e}\node{V}\arrow{s}\arrow{e}\node{0}\\
\node{0}\arrow{e}\node{G}\arrow{e}\node{P_{n-1}}\arrow{e}\node{...}\arrow{e}\node{P_0}\arrow{e}\node{M}\arrow{e}\node{0}
\end{diagram}
\]

Both rows are exact complexes, so the mapping cone is also exact. After factoring out the exact subcomplex $0 \rightarrow G \xrightarrow{=} G \rightarrow 0$ we obtain an exact complex $0 \rightarrow P'_{n-1} \rightarrow P_{n-1} \oplus P'_{n-2} \rightarrow \ldots \rightarrow P_1 \oplus P'_0 \rightarrow P_0 \oplus V \xrightarrow{\alpha} M \rightarrow 0$. This gives a short exact sequence $0 \rightarrow L \rightarrow P_0 \oplus V \rightarrow M \rightarrow 0$ with $L = Ker(\alpha)$ of finite projective dimension, with $P_0$ projective, $V$ a $\mathcal{PGF}_{\mathcal{B}}$ module, and with $M \in \mathcal{DP}$. By Lemma 6 we have that $Ext^1(M,L)=0$. Thus $P_0 \oplus V \simeq M \oplus L$ and therefore $M \in \mathcal{PGF}_{\mathcal{B}} \subseteq \mathcal{GF}_{\mathcal{B}}$. So $M$ has Gorenstein $\mathcal{B}$ flat dimension equal to zero.

\end{proof}

\begin{corollary} Assume that $\mathcal{B}$ is a semi-definable class of right $R$-modules that contains the class of injective modules. The following are equivalent:
\begin{enumerate}
\item $\mathcal{DP} = \mathcal{PGF}_{\mathcal{B}}$
\item Every Ding projective module has finite Gorenstein $\mathcal{B}$ flat dimension.
\end{enumerate}
\end{corollary}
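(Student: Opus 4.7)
The plan is to derive the corollary as an almost immediate consequence of Proposition 1, together with Lemmas 1 and 2. Both implications are short, so I would present them in sequence.

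For the direction $(1) \Rightarrow (2)$, I would argue as follows. If $\mathcal{DP} = \mathcal{PGF}_{\mathcal{B}}$, then any Ding projective module $M$ lies in $\mathcal{PGF}_{\mathcal{B}}$, hence in $\mathcal{GF}_{\mathcal{B}}$ by the definition of the $\mathcal{PGF}_{\mathcal{B}}$-class. Consequently $M$ has Gorenstein $\mathcal{B}$ flat dimension equal to zero, which is in particular finite. This direction requires nothing beyond the inclusion $\mathcal{PGF}_{\mathcal{B}} \subseteq \mathcal{GF}_{\mathcal{B}}$ and is essentially tautological.

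For the direction $(2) \Rightarrow (1)$, the inclusion $\mathcal{PGF}_{\mathcal{B}} \subseteq \mathcal{DP}$ holds by Lemma 1 (using the hypothesis $\mathcal{B} \supseteq Inj$). For the reverse inclusion, let $M \in \mathcal{DP}$. By hypothesis, $M$ has finite Gorenstein $\mathcal{B}$ flat dimension. Proposition 1 says that the Gorenstein $\mathcal{B}$ flat dimension of a Ding projective module is either zero or infinite, so in our case it must be zero, i.e.\ $M \in \mathcal{GF}_{\mathcal{B}}$. Thus $M \in \mathcal{DP} \cap \mathcal{GF}_{\mathcal{B}}$, and Lemma 2 gives $\mathcal{DP} \cap \mathcal{GF}_{\mathcal{B}} = \mathcal{PGF}_{\mathcal{B}}$, so $M \in \mathcal{PGF}_{\mathcal{B}}$.

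There is really no main obstacle here: all the substantive work has already been packaged into Proposition 1 (which in turn uses Lemmas 5 and 6 and the fact that $\mathcal{DP}$ is closed under kernels of epimorphisms). The only thing to be careful about is to invoke the hypotheses $\mathcal{B} \supseteq Inj$ and $\mathcal{B}$ semi-definable at each step where they are needed, namely when citing Lemma 1 (for $\mathcal{PGF}_{\mathcal{B}} \subseteq \mathcal{DP}$), Lemma 2 (for $\mathcal{PGF}_{\mathcal{B}} = \mathcal{DP} \cap \mathcal{GF}_{\mathcal{B}}$), and Proposition 1 (for the dichotomy on Gorenstein $\mathcal{B}$ flat dimension of Ding projective modules).
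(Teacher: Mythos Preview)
Your proof is correct and follows essentially the same approach as the paper. The only cosmetic difference is that for $(2)\Rightarrow(1)$ the paper appeals directly to ``the proof of Proposition 1'' (which actually establishes $M\in\mathcal{PGF}_{\mathcal{B}}$, not merely $M\in\mathcal{GF}_{\mathcal{B}}$), whereas you cite the \emph{statement} of Proposition~1 together with Lemma~2; both routes arrive at the same conclusion with the same underlying work.
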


\begin{proof}
(1) implies (2) is immediate since $\mathcal{PGF}_{\mathcal{B}} \subseteq \mathcal{GF}_{\mathcal{B}}$.\\
(2) $\Rightarrow$ (1) By the proof of Proposition 1, every Ding projective module of finite Gorenstein $\mathcal{B}$ flat dimension is a $\mathcal{PGF}_{\mathcal{B}}$ module.
\end{proof}

In particular when $\mathcal{B}$ is the class of injective modules we obtain:
\begin{proposition}
 The Gorenstein flat dimension of a Ding projective module is either zero or infinite.
\end{proposition}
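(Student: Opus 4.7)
The plan is to derive Proposition 2 as the specialization of Proposition 1 to the case $\mathcal{B} = \mathrm{Inj}$, the class of injective right $R$-modules. The only real work is to check that $\mathrm{Inj}$ satisfies the hypotheses of Proposition 1, after which the conclusion is automatic.

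First I would verify the hypotheses. The inclusion $\mathrm{Inj} \supseteq \mathrm{Inj}$ is trivial. For semi-definability of $\mathrm{Inj}$, I would note two things: (a) the class of injective right $R$-modules is closed under arbitrary direct products, since a product of injectives is injective; and (b) any injective cogenerator $E$ of the category of right $R$-modules is pure-injective (every injective module is pure-injective), lies in $\mathrm{Inj}$, and serves as an elementary cogenerator for the definable closure $\langle \mathrm{Inj}\rangle$, since every module in this closure embeds purely into a product of copies of such a cogenerator. Thus $\mathrm{Inj}$ is semi-definable in the sense defined in the introduction.

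Next I would observe that $\mathcal{GF}_{\mathrm{Inj}}$ coincides with $\mathcal{GF}$. By the definition recalled in the introduction, $\mathcal{GF}_{\mathcal{B}}$ consists of the cycles of exact complexes of flat modules that remain exact upon tensoring with any $B\in\mathcal{B}$; taking $\mathcal{B}=\mathrm{Inj}$ recovers precisely the definition of Gorenstein flat modules. Hence Gorenstein $\mathrm{Inj}$ flat dimension equals Gorenstein flat dimension.

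Applying Proposition 1 with $\mathcal{B}=\mathrm{Inj}$ then yields the statement: if a Ding projective module $M$ has finite Gorenstein flat dimension, that dimension must equal zero. The main point of the argument is really the verification that $\mathrm{Inj}$ is semi-definable; this is the only substantive check, and it reduces to the well-known existence of an injective cogenerator together with the fact that injectives are pure-injective.
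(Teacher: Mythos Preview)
Your approach is exactly the paper's: Proposition~2 is stated there as the specialization of Proposition~1 to $\mathcal{B}=\mathrm{Inj}$, with no separate proof given. Your added verification that $\mathrm{Inj}$ is semi-definable is correct; the one step you assert without argument---that every module in $\langle \mathrm{Inj}\rangle$ purely embeds in a product of copies of an injective cogenerator---follows because $\langle \mathrm{Inj}\rangle$ is the class of absolutely pure (fp-injective) modules, and any absolutely pure module is pure in its injective envelope, which in turn is a direct summand of a product of copies of the cogenerator.
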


\begin{proposition}
 The following are equivalent:
\begin{enumerate}
\item $\mathcal{DP} = \mathcal{PGF}$
\item Every Ding projective module has finite Gorenstein flat dimension.
\end{enumerate}
\end{proposition}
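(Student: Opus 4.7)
The plan is to derive this equivalence immediately from the two preceding results, namely Proposition 2 (Gorenstein flat dimension of a Ding projective module is either zero or infinite) and Lemma 3 (the identity $\mathcal{PGF} = \mathcal{DP} \cap \mathcal{GF}$). It is essentially the special case $\mathcal{B} = Inj$ of Corollary 4, but it is short enough to write out directly.

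First I would dispatch the direction $(1) \Rightarrow (2)$, which is trivial: by definition $\mathcal{PGF} \subseteq \mathcal{GF}$, so under the assumption $\mathcal{DP} = \mathcal{PGF}$ every Ding projective module is in fact Gorenstein flat and therefore has Gorenstein flat dimension zero, which is certainly finite.

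For the reverse implication $(2) \Rightarrow (1)$, I only need the inclusion $\mathcal{DP} \subseteq \mathcal{PGF}$, since the inclusion $\mathcal{PGF} \subseteq \mathcal{DP}$ has already been established as Corollary 1. Given $M \in \mathcal{DP}$, hypothesis (2) provides that $M$ has finite Gorenstein flat dimension. Applying Proposition 2 forces this dimension to be zero, so $M \in \mathcal{GF}$. Combining with $M \in \mathcal{DP}$ and invoking Lemma 3 yields $M \in \mathcal{DP} \cap \mathcal{GF} = \mathcal{PGF}$, as required.

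There is no real obstacle here because the conceptual work has been done upstream: Proposition 2 is precisely what upgrades a finiteness assumption to actual membership in $\mathcal{GF}$, and Lemma 3 is what converts the joint membership $M \in \mathcal{DP} \cap \mathcal{GF}$ into $\mathcal{PGF}$. If one wished to avoid citing Proposition 2 explicitly, one could instead quote Corollary 4 with $\mathcal{B} = Inj$, after observing that $Inj$ is semi-definable (it is closed under arbitrary products and contains an elementary cogenerator of its definable closure, namely any sufficiently large injective cogenerator) and that $\mathcal{GF}_{Inj} = \mathcal{GF}$, $\mathcal{PGF}_{Inj} = \mathcal{PGF}$ by definition.
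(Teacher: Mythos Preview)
Your proof is correct and matches the paper's approach: Proposition~3 is presented in the paper simply as the special case $\mathcal{B}=Inj$ of the preceding general corollary (Corollary~5 in the paper's numbering), whose proof uses exactly the dichotomy established in Proposition~1 together with $\mathcal{PGF}_{\mathcal{B}}\subseteq\mathcal{GF}_{\mathcal{B}}$. Your argument, which invokes Proposition~2 for the dichotomy and then Lemma~3 for the identification $\mathcal{PGF}=\mathcal{DP}\cap\mathcal{GF}$, is an equivalent unpacking of the same reasoning.
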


We recall that the left weak Gorenstein global dimension of an associative ring $R$ is defined as $l.w.Ggl.dim(R) = sup \{Gfd. M | M$ is a left $R$-module$ \}$

As an immediate consequence of Proposition 3 we obtain the following sufficient condition for the two classes of modules ($\mathcal{PGF}$ and $\mathcal{DP}$) to coincide.

\begin{proposition}
If $R$ has finite left weak Gorenstein global dimension then $\mathcal{DP} = \mathcal{PGF}$.
\end{proposition}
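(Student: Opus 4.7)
The plan is to derive this directly from Proposition 3, which gives the equivalence $\mathcal{DP} = \mathcal{PGF}$ if and only if every Ding projective module has finite Gorenstein flat dimension. Since the hypothesis bounds the Gorenstein flat dimension of \emph{every} left $R$-module, the Ding projective modules in particular satisfy the required finiteness condition, and the conclusion is immediate.

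More precisely, suppose $l.w.Ggl.dim(R) = n < \infty$. By definition this means that for every left $R$-module $N$, the Gorenstein flat dimension $Gfd(N) \le n$. In particular, for every Ding projective module $M$, we have $Gfd(M) \le n < \infty$, so condition (2) of Proposition 3 holds.

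Applying Proposition 3 then yields $\mathcal{DP} = \mathcal{PGF}$, which is exactly what we want. There is really no obstacle here: the entire content is packaged into Proposition 3, which itself rested on Proposition 1 (where the substantive mapping-cone argument and Lemma 6 were used to show the Gorenstein flat dimension of a Ding projective module is either zero or infinite). The role of this final proposition is just to flag the most natural global hypothesis on $R$ under which Proposition 3(2) is automatically satisfied.
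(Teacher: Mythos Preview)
Your proof is correct and matches the paper's approach exactly: the paper also presents Proposition 4 as an immediate consequence of Proposition 3, since finite left weak Gorenstein global dimension forces every module (hence every Ding projective) to have finite Gorenstein flat dimension.
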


The following result gives another necessary and sufficient condition in order to have that $\mathcal{DP} = \mathcal{PGF}$.

\begin{lemma}
The following are equivalent:\\

1. $\mathcal{DP} \subseteq \mathcal{GF}$\\
2. $\mathcal{DP} = \mathcal{PGF}$\\
3. $\mathcal{DP} \bigcap \mathcal{PGF}^\bot \subseteq Flat$.
\end{lemma}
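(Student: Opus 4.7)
The plan is to use the equivalence $(1)\Leftrightarrow (2)$, which is essentially Corollary~1, as a pivot, and then establish $(2)\Leftrightarrow (3)$ by exploiting the complete cotorsion pair $(\mathcal{PGF},\mathcal{PGF}^\bot)$ together with the equality $\mathcal{GF}\cap\mathcal{PGF}^\bot=Flat$ from \cite{EIP}, Theorem~2.14 (applied to $\mathcal{B}=Inj$).

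For $(2)\Rightarrow (3)$: assume $\mathcal{DP}=\mathcal{PGF}$. Then by Corollary~1 we also have $\mathcal{DP}\subseteq\mathcal{GF}$, so
\[
\mathcal{DP}\cap\mathcal{PGF}^\bot \subseteq \mathcal{GF}\cap\mathcal{PGF}^\bot = Flat,
\]
where the last equality comes from \cite{EIP}, Theorem~2.14. This step is straightforward.

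For $(3)\Rightarrow (2)$: assume $\mathcal{DP}\cap\mathcal{PGF}^\bot\subseteq Flat$, and let $M\in\mathcal{DP}$. Since $(\mathcal{PGF},\mathcal{PGF}^\bot)$ is a complete cotorsion pair (by \cite{EIP}, Theorem~2.13 with $\mathcal{B}=Inj$), there is a short exact sequence $0\to A\to D\to M\to 0$ with $D\in\mathcal{PGF}$ and $A\in\mathcal{PGF}^\bot$. Using that $\mathcal{DP}$ is closed under kernels of epimorphisms (already invoked in the proof of Proposition~1 via [11], Theorem~2.6) and that $D\in\mathcal{PGF}\subseteq\mathcal{DP}$, we conclude $A\in\mathcal{DP}$. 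Therefore $A\in\mathcal{DP}\cap\mathcal{PGF}^\bot\subseteq Flat$ by hypothesis. Since $M\in\mathcal{DP}$ and, by definition, $Flat\subseteq\mathcal{DP}^\bot$, we get $Ext^1(M,A)=0$, so the sequence splits and $M$ is a direct summand of $D\in\mathcal{PGF}$. As $\mathcal{PGF}$ is closed under direct summands, $M\in\mathcal{PGF}$, giving $\mathcal{DP}\subseteq\mathcal{PGF}$; the reverse inclusion is Corollary~1 (or rather its preceding Corollary, over any ring).

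Putting everything together, $(1)\Leftrightarrow (2)$ is Corollary~1 and the preceding argument gives $(2)\Leftrightarrow (3)$. The potential obstacle is the invocation that $\mathcal{DP}$ is closed under kernels of epimorphisms, but this is precisely the hereditary property of the cotorsion pair generated by $\mathcal{DP}$ and has already been used earlier in the paper, so no new work is required.
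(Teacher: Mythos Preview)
Your proof is correct and follows essentially the same approach as the paper: the equivalence $(1)\Leftrightarrow(2)$ via Corollary~2, and for $(3)\Rightarrow(1)/(2)$ the use of the complete cotorsion pair $(\mathcal{PGF},\mathcal{PGF}^\bot)$, the fact that $\mathcal{DP}$ is closed under kernels of epimorphisms, and the splitting argument are exactly what the paper does. The only cosmetic differences are that the paper routes through $1\Rightarrow 3\Rightarrow 1$ rather than $2\Rightarrow 3\Rightarrow 2$, and cites \cite{saroch:18:gor}, Theorem~3.11 (rather than \cite{EIP}, Theorem~2.14) for $\mathcal{GF}\cap\mathcal{PGF}^\bot=Flat$.
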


\begin{proof}
Statements (1) and (2) are equivalent by Corollary 2.\\
1 $\Rightarrow$ 3. If  $\mathcal{DP} \subseteq \mathcal{GF}$ then $\mathcal{DP} \bigcap \mathcal{PGF}^\bot \subseteq \mathcal{GF} \bigcap \mathcal{PGF}^\bot  = Flat$ (by \cite{saroch:18:gor}, Theorem 3.11).\\
3. $\Rightarrow$ 1. Let $M$ be a Ding projective module. Since $(\mathcal{PGF}, \mathcal{PGF}^\bot)$ is a complete cotorsion pair, there is an exact sequence $0 \rightarrow A \rightarrow B \rightarrow M \rightarrow 0$ with $A \in \mathcal{PGF}^\bot$, $B \in \mathcal{PGF} \subseteq \mathcal{DP}$. Since both $M$ and $B$ are Ding projective and the class of Ding projective modules is closed under kernels of epimorphisms (by \cite{YLL}, Theorem 2.6), it follows that $A$ is also Ding projective. Thus $A \in \mathcal{DP} \bigcap \mathcal{PGF}^\bot \subseteq Flat$, and therefore $Ext^1(M,A)=0$. It follows that $B \simeq A \oplus M$, so $M$ is a $\mathcal{PGF}$ module, hence Gorenstein flat.
\end{proof}

Theorem 1 below gives more necessary and sufficient conditions in order for the classes of Ding projectives and that of $\mathcal{PGF}$ modules to coincide.

\begin{theorem}
Let $R$ be any ring. The following are equivalent:\\
(1) $\mathcal{DP} \subseteq \mathcal{GF}$.\\
(2) $\mathcal{DP} =\mathcal{PGF}$\\
(3) For any Ding projective module $M$, its character module, $M^+$, is Gorenstein injective.\\
(4) The class $Inj ^+$ of all character modules of injective right $R$-modules, is contained in $\mathcal{DP}^\bot$.
\end{theorem}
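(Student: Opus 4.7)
The plan is to treat $(1)\Leftrightarrow(2)$ as already established (it is Corollary 2, restated in Lemma 7 above), so the remaining task is to close the cycle $(2)\Rightarrow(3)\Rightarrow(4)\Rightarrow(2)$. The unifying device throughout is the character-module adjunction $Ext^i_R(M,N^+)\cong Tor_i^R(N,M)^+$ (used in both left-right variants after swapping sides), combined with the fact that $\mathbb{Q}/\mathbb{Z}$ is an injective cogenerator of abelian groups, so a complex of abelian groups is exact if and only if its character complex is exact.

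For $(2)\Rightarrow(3)$, under the hypothesis every Ding projective module is a $\mathcal{PGF}$ module and hence Gorenstein flat (since $\mathcal{PGF}\subseteq\mathcal{GF}$ by definition). The essential ingredient is the Saroch--Stovicek theorem from \cite{saroch:18:gor} asserting that, over an arbitrary ring, the character module of any Gorenstein flat module is Gorenstein injective; applied to $M\in\mathcal{DP}$ this gives that $M^+$ is Gorenstein injective, which is $(3)$.

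For $(3)\Rightarrow(4)$, fix a Ding projective module $M$ and an injective right $R$-module $I$. By $(3)$, $M^+$ is Gorenstein injective as a right $R$-module, so $Ext^1(I,M^+)=0$; the adjunction gives $Tor_1(I,M)^+=0$, whence $Tor_1(I,M)=0$. Reading the adjunction the other direction, $Ext^1(M,I^+)\cong Tor_1(I,M)^+=0$, so $I^+\in\mathcal{DP}^\perp$, which is $(4)$.

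For $(4)\Rightarrow(2)$, let $M\in\mathcal{DP}$ and pick a defining exact complex of projectives $P=\ldots\to P_1\to P_0\to P_{-1}\to\ldots$ with $Z_0(P)=M$ and every cycle in $\mathcal{DP}$. For any injective right module $I$, hypothesis $(4)$ gives $Ext^1(Z_j(P),I^+)=0$ for every $j$, which is exactly the condition that $Hom(P,I^+)$ is exact. By tensor-Hom, $(I\otimes_R P)^+\cong Hom(P,I^+)$, so the character complex of $I\otimes_R P$ is exact, and hence $I\otimes_R P$ itself is exact, showing $M\in\mathcal{PGF}$. Combined with Corollary 2, this yields $\mathcal{DP}=\mathcal{PGF}$. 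The essential obstacle is the single implication $(2)\Rightarrow(3)$, which invokes the deep Saroch--Stovicek theorem that character modules of Gorenstein flat modules are Gorenstein injective; the remaining implications are formal consequences of the character-module adjunction together with the closure of $\mathcal{DP}$ under kernels of epimorphisms.
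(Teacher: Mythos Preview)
Your proof is correct and follows essentially the same route as the paper: the paper also reduces $(1)\Leftrightarrow(2)$ to Corollary~2, proves $(2)\Rightarrow(3)$ via the inclusion $\mathcal{GF}^+\subseteq\mathcal{GI}$, and handles the remaining implications by the same character-module/tensor--Hom adjunction argument (the paper organizes the cycle as $(3)\Rightarrow(1)$ and $(3)\Leftrightarrow(4)$, but your $(4)\Rightarrow(2)$ simply merges two of its steps). One remark: you label $(2)\Rightarrow(3)$ the ``essential obstacle'' requiring the ``deep Saroch--Stovicek theorem,'' but in fact the implication $M\in\mathcal{GF}\Rightarrow M^+\in\mathcal{GI}$ is elementary over any ring (take a defining acyclic complex $F$ of flats; then $F^+$ is an acyclic complex of injectives with $\Hom(I,F^+)\cong(I\otimes F)^+$ exact), and the paper accordingly cites Holm rather than \v{S}aroch--\v{S}\v{t}ov\'{\i}\v{c}ek.
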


\begin{proof}
Statements (1) and (2) are equivalent by Corollary 2.\\
2 $\Rightarrow$ 3. is immediate since any Ding projective module $M$ is Gorenstein flat in this case, so we have $M^+ \in \mathcal{GF}^+ \subseteq \mathcal{GI}$ (by \cite{holm:05:gor.dim}, Theorem 3.6).\\
3 $\Rightarrow$ 1. Let $M$ be a Ding projective module. Then there is an exact and $Hom(-, Flat)$ exact complex of projectives $P = \ldots \rightarrow P_1 \rightarrow P_0 \rightarrow P_{-1} \ldots $ such that $M = Z_0P$, and $Z_jP$ is Ding projective for all $j$. Then $P^+$ is an exact complex of injective modules, and, by (3), all cycles of $P^+$ are Gorenstein injective modules. So $Hom(I, P^+)$ is exact for any injective module $I$. Since $Hom(I, P^+) \simeq (I \otimes P)^+$ it follows that $(I \otimes P)^+ = Hom(I \otimes P, Q/Z)$ is exact. Since the abelian group $Q/Z$ is faithfully injective, it follows that the complex $I \otimes P$ is exact, for any injective right $R$-module $I$. Thus $M$ is Gorenstein flat.\\
 So statements (1), (2) and (3) are equivalent.\\
 3 $\Rightarrow$ 4. Let $M$ be a Ding projective module. For any injective module $I$ we have $Ext^1(M, I^+) \simeq Ext^1(I, M^+) = 0$ (by (3)). Thus $Inj^+ \subseteq \mathcal{DP}^\bot$.\\
 4 $\Rightarrow$ 3. Assume that $Inj^+ \subseteq \mathcal{DP}^\bot$. Let $M$ be a Ding projective module. By definition there exists an exact complex of projective modules $P$ such that $M = Z_0 P$ and $Z_jP \in \mathcal{DP}$ for all $l$. By (4) we have that $Ext^1(Z_jP, I^+) =0$ for any injective $I$. Therefore $Ext^1(I, Z_jP^+) =0$ for any injective module $I$. So  $P^+$ is an exact complex of injective modules and $Hom(I, P^+)$ is exact for any injective module $I$. It follows that $Z_jP^+$ is Gorenstein injective for all $j$. In particular, $M^+$ is Gorenstein injective.
\end{proof}

As a consequence we obtain the following:\\

\begin{theorem}
Let $R$ be a right coherent ring. Then $\mathcal{DP} = \mathcal{PGF} = \mathcal{GP}_{ac}$
\end{theorem}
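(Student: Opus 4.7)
The plan is to verify that right coherence of $R$ ensures both condition (4) of Theorem 1 (yielding $\mathcal{DP} = \mathcal{PGF}$) and the identification of the test classes used in the definitions of $\mathcal{DP}$ and $\mathcal{GP}_{ac}$, so that these two classes coincide directly.

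First I would prove $\mathcal{DP} = \mathcal{PGF}$. By the Cheatham--Stone characterization of right coherent rings, for any injective right $R$-module $I$ the character module $I^+$ is a flat left $R$-module. Since $Flat \subseteq \mathcal{DP}^\bot$ holds by the very definition of Ding projectivity, it follows that $Inj^+ \subseteq Flat \subseteq \mathcal{DP}^\bot$. Condition (4) of Theorem 1 is therefore satisfied, and Theorem 1 gives $\mathcal{DP} = \mathcal{PGF}$.

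Next I would show $\mathcal{DP} = \mathcal{GP}_{ac}$. Over a right coherent ring, the finitely presented right $R$-modules form an abelian category closed under kernels; in particular, every finitely presented right $R$-module is of type $FP_\infty$. Consequently, a left $R$-module $L$ is level if and only if $Tor_1(M, L) = 0$ for every finitely presented right $R$-module $M$, and by the standard flatness criterion this is equivalent to $L$ being flat. Hence the classes of level and flat left $R$-modules coincide, and comparing the definitions of $\mathcal{GP}_{ac}$ and $\mathcal{DP}$ (as cycles of exact complexes of projectives that remain exact under $Hom(-, L)$ for $L$ ranging over level, respectively flat, modules) immediately gives $\mathcal{GP}_{ac} = \mathcal{DP}$.

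Combining the two equalities with the chain $\mathcal{GP}_{ac} \subseteq \mathcal{PGF} \subseteq \mathcal{DP}$ recalled in the introduction yields $\mathcal{DP} = \mathcal{PGF} = \mathcal{GP}_{ac}$. There is no genuine obstacle in this argument: both the flatness of $I^+$ and the coincidence of level with flat are standard consequences of right coherence; the role of Theorem 1 is simply to translate the first of these into the non-trivial inclusion $\mathcal{DP} \subseteq \mathcal{PGF}$.
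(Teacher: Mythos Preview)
Your argument is correct. The first half, deriving $\mathcal{DP}=\mathcal{PGF}$ from $Inj^{+}\subseteq Flat\subseteq\mathcal{DP}^{\bot}$ via condition (4) of Theorem~1, is exactly the paper's proof.

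For the equality with $\mathcal{GP}_{ac}$ the two approaches diverge slightly. The paper does not argue directly that $\mathcal{GP}_{ac}=\mathcal{DP}$; instead it quotes \v{S}aroch--\v{S}\v{t}ov\'{\i}\v{c}ek (Corollary~3.5 of \cite{saroch:18:gor}) for the statement $\mathcal{PGF}=\mathcal{GP}_{ac}$ over a coherent ring, and then combines this with $\mathcal{DP}=\mathcal{PGF}$. Your route is more elementary and self-contained: you use that right coherence forces every finitely presented right module to be of type $FP_{\infty}$, so the level left modules are exactly the flat ones, and hence $\mathcal{GP}_{ac}=\mathcal{DP}$ directly from the definitions. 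This avoids the external citation and makes transparent that the equality $\mathcal{GP}_{ac}=\mathcal{DP}$ over coherent rings is essentially a tautology once ``level $=$ flat'' is known; the paper's citation, on the other hand, packages the same fact together with the less obvious inclusion $\mathcal{GP}_{ac}\subseteq\mathcal{PGF}$ already recorded from \cite{saroch:18:gor}. Either way the chain $\mathcal{GP}_{ac}\subseteq\mathcal{PGF}\subseteq\mathcal{DP}$ collapses.
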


\begin{proof}
 Since $R$ is coherent, $Inj ^+ \subseteq Flat \subseteq \mathcal{DP} ^ \bot$. By the above we have that $\mathcal{DP} = \mathcal{PGF}$. Also, by \cite{saroch:18:gor}, Corollary 3.5, we have that over any coherent ring $R$, $\mathcal{PGF} = \mathcal{GP}_{ac}$.
\end{proof}

Theorem 3 below is the analogue of Theorem 1 when the class of Gorenstein projective modules replaces that of Ding projectives. It gives necessary and sufficient conditions in order to have $\mathcal{PGF} = \mathcal{GP}$.\\

\begin{theorem}
Let $R$ be any ring. The following are equivalent:\\
(1) $\mathcal{GP} \subseteq \mathcal{GF}$.\\
(2) $\mathcal{GP} =\mathcal{PGF}$\\
(3) For any Gorenstein projective module $M$, its character module, $M^+$, is Gorenstein injective.\\
(4) The class $Inj ^+$ of all character modules of injective right $R$-modules, is contained in $\mathcal{GP}^\bot$.
\end{theorem}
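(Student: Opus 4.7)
The plan is to imitate the proof of Theorem~1 verbatim, with $\mathcal{GP}$ playing the role of $\mathcal{DP}$. I will establish the cycle $(2) \Rightarrow (1) \Rightarrow (3) \Rightarrow (2)$ together with the separate equivalence $(3) \Leftrightarrow (4)$. Nothing in the argument requires $\mathcal{GP}$ to be closed under kernels of epimorphisms in any way that $\mathcal{DP}$ wasn't; we will only use the defining exact complex of projectives with all cycles Gorenstein projective.

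The implication $(2) \Rightarrow (1)$ is immediate from $\mathcal{PGF} \subseteq \mathcal{GF}$, which holds over any ring. For $(1) \Rightarrow (3)$, any $M \in \mathcal{GP} \subseteq \mathcal{GF}$ satisfies $M^+ \in \mathcal{GF}^+ \subseteq \mathcal{GI}$ by \cite{holm:05:gor.dim}, Theorem~3.6. The equivalence $(3) \Leftrightarrow (4)$ is handled exactly as in the corresponding step of Theorem~1: via the natural isomorphism $Ext^1(M, I^+) \simeq Ext^1(I, M^+)$ plus the observation that if every cycle of the exact complex $P^+$ of injective right $R$-modules satisfies $Ext^1(I, (Z_j P)^+) = 0$ for all injective right $I$, then those cycles are Gorenstein injective.

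The crux is $(3) \Rightarrow (2)$. Given $M \in \mathcal{GP}$, pick an exact complex of projectives $P = \ldots \to P_1 \to P_0 \to P_{-1} \to \ldots$ with $M = Z_0(P)$ and every cycle $Z_j(P) \in \mathcal{GP}$. The character complex $P^+$ is then an exact complex of injective right $R$-modules whose cycles $(Z_j P)^+$ are Gorenstein injective by hypothesis~(3). Consequently $Hom(I, P^+)$ is exact for every injective right $R$-module $I$. The adjunction $Hom(I, P^+) \simeq (I \otimes P)^+$ together with the faithful injectivity of $\mathbb{Q}/\mathbb{Z}$ forces $I \otimes P$ to be exact, whence $M \in \mathcal{PGF}$ by definition. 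Since $\mathcal{PGF} \subseteq \mathcal{GP}$ always holds, this gives $\mathcal{GP} = \mathcal{PGF}$.

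I do not anticipate a serious obstacle, since every ingredient (Holm's character-module inclusion, the $Ext$ adjunction, faithful injectivity of $\mathbb{Q}/\mathbb{Z}$, and the cycle-wise structure of the defining $\mathcal{GP}$ complex) is already used in Theorem~1. The only conceptual point worth flagging is in $(3) \Rightarrow (2)$: one must apply the hypothesis to every cycle of $P$ at once, and for this it is essential that the definition of Gorenstein projectivity produces a complex whose \emph{all} cycles belong to $\mathcal{GP}$, not merely the one at position zero.
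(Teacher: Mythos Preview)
Your proof is correct, but it differs from the paper's in the key implication establishing $\mathcal{GP}\subseteq\mathcal{PGF}$. You prove $(3)\Rightarrow(2)$ by dualising the defining totally acyclic complex $P$ of projectives: since all cycles of $P$ lie in $\mathcal{GP}$, hypothesis~(3) makes all cycles of $P^+$ Gorenstein injective, so $\Hom(I,P^+)\simeq (I\otimes P)^+$ is exact for every injective $I$, and faithful injectivity of $\mathbb{Q}/\mathbb{Z}$ yields $M\in\mathcal{PGF}$ directly. The paper instead proves $(1)\Rightarrow(2)$ via the Bennis--Mahdou reduction: it first treats a \emph{strongly} Gorenstein projective module, where the $2$-periodic complex $\cdots\to P\to P\to\cdots$ makes the $\mathrm{Tor}$-vanishing check trivial, and then passes to arbitrary $M\in\mathcal{GP}$ by the fact that every Gorenstein projective is a summand of a strongly Gorenstein projective and $\mathcal{PGF}$ is closed under summands. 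For $(1)\Leftrightarrow(3)$ the paper simply cites Emmanouil~\cite{IE}, whereas you derive it from Holm's theorem and the cycle $(3)\Rightarrow(2)\Rightarrow(1)$. Your route is the more faithful analogue of Theorem~1 and avoids the external inputs from \cite{bennis:07:strongly} and \cite{IE}; the paper's route isolates the Gorenstein-injective machinery from the $(1)\Rightarrow(2)$ step and makes that step more elementary once the strongly Gorenstein reduction is granted.
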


\begin{proof}
(1) $\Rightarrow$ (2) One inclusion ($\mathcal{PGF} \subseteq \mathcal{GP} $) holds over any ring.\\
We show that if $\mathcal{GP} \subseteq \mathcal{GF}$ then we also have that $\mathcal{GP} \subseteq \mathcal{PGF}$.\\
Let $M$ be a strongly Gorenstein projective $R$-module. Then there is an exact sequence $\ldots \rightarrow P \xrightarrow{f} P \xrightarrow{f} P \rightarrow \ldots $ with $M = Ker~ f$ and $P \in Proj$. \\
Since, by hypothesis, $M$ is Gorenstein flat, we have that $Tor_i(A,M) = 0$ for any injective right $R$-module $A$. The short exact sequence $0 \rightarrow M \rightarrow P \rightarrow M \rightarrow 0$ gives an exact sequence $0 = Tor_1(A, M) \rightarrow A \otimes M \rightarrow A \otimes P \rightarrow A \otimes M \rightarrow 0$, for every injective $A_R$. Thus the sequence $0 \rightarrow A \otimes M \rightarrow A \otimes P \rightarrow A \otimes M \rightarrow 0$ is exact for any injective $A_R$, and therefore the complex $\ldots \rightarrow P \xrightarrow{f} P \xrightarrow{f} P \rightarrow \ldots $ is $Inj \otimes -$ exact. By definition $M$ is a projectively coresolved Gorenstein flat module.\\
If $M'$ is a Gorenstein projective module then $M'$ is a direct summand of a strongly Gorenstein projective module $M$ (by \cite{bennis:07:strongly}). By the above $M \in \mathcal{PGF}$, and by \cite{saroch:18:gor} Theorem 3.9, this class is closed under direct summands. Thus $M'$ is projectively coresolved Gorenstein flat.\\
(2) implies (1) is immediate since $\mathcal{PGF} \subseteq \mathcal{GF}$.\\
(1) and (3) are equivalent by \cite{IE}, Theorem 2.2.\\
(3) implies (4) is immediate, because for any injective right $R$-module $I$, for every Gorenstein projective $_RM$, we have $Ext^1(M, I^+) \simeq Ext^1(I, M^+) = 0$, since $M^+$ is Gorenstein injective.\\
(4) $\Rightarrow $ (3) Let $M$ be a Gorenstein projective $R$-module. Then $M = Z_0 (P)$ with $P$ a totally acyclic complex of projectives. Let $M_j = Z_j(P)$, for all integers $j$. Then the cycles of the acyclic complex $P^+$ are the modules $M_j ^+$. Since for any injective $R$-module $I$ we have $Ext^1(I,M_j^+) \simeq Ext^1(M_j^+, I) = 0$, it follows that the exact complex of injective modules $P^+$ stays exact when applying the functor $Hom(I,-)$, with $I_R$ injective. Thus $P^+$ is totally acyclic, so $M^+$ is Gorenstein injective.

\end{proof}

\begin{theorem}
Let $R$ be a coherent ring. Then $\mathcal{GP} = \mathcal{PGF}$ if and only if $Flat \subseteq \mathcal{GP}^\bot$. In particular, this is the case when $R$ is (right coherent and) left $n$-perfect (for some $n \ge 0$).
\end{theorem}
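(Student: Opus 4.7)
The proof will chain together two earlier results: Theorem 2 (which, over a coherent ring, identifies $\mathcal{DP}$ with $\mathcal{PGF}$) and Lemma 4 (which characterizes when $\mathcal{DP} = \mathcal{GP}$). The coherence hypothesis is only needed for the backward direction, since the forward direction works over any ring.

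For the forward direction, I would assume $\mathcal{GP} = \mathcal{PGF}$. Since the inclusions $\mathcal{PGF} \subseteq \mathcal{DP} \subseteq \mathcal{GP}$ hold over any ring (by Corollary 1 together with the fact, stated in the introduction, that every Ding projective module is Gorenstein projective), the equality $\mathcal{GP} = \mathcal{PGF}$ forces $\mathcal{DP} = \mathcal{GP}$. By Lemma 4 this is equivalent to $Flat \subseteq \mathcal{GP}^\bot$.

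For the backward direction, I would invoke coherence. Assuming $Flat \subseteq \mathcal{GP}^\bot$, Lemma 4 yields $\mathcal{DP} = \mathcal{GP}$. Since $R$ is coherent, Theorem 2 gives $\mathcal{DP} = \mathcal{PGF}$. Combining these two equalities produces $\mathcal{GP} = \mathcal{PGF}$, which is what is required.

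For the "in particular" clause, I need to check that a left $n$-perfect ring satisfies $Flat \subseteq \mathcal{GP}^\bot$. If $F$ is flat then $F$ has a finite projective resolution of length at most $n$. For any Gorenstein projective module $G$, the defining property of $\mathcal{GP}$ gives $Ext^i(G,P) = 0$ for every projective $P$ and every $i \ge 1$, and a standard dimension-shifting argument along this finite resolution then yields $Ext^1(G,F) = 0$. Hence $Flat \subseteq \mathcal{GP}^\bot$, and the main equivalence applies. The only potentially subtle point is making sure the forward direction needs no coherence hypothesis, but this is clear from the unconditional inclusions $\mathcal{PGF} \subseteq \mathcal{DP} \subseteq \mathcal{GP}$ and from Lemma 4 being stated over an arbitrary ring.
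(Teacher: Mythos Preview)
Your proof is correct and follows essentially the same route as the paper: the backward direction is identical (Theorem~2 plus Lemma~4), and for the forward direction the paper argues directly that $Flat \subseteq \mathcal{PGF}^\bot = \mathcal{GP}^\bot$, whereas you pass through $\mathcal{DP} = \mathcal{GP}$ and invoke Lemma~4 --- a cosmetic difference. Your handling of the $n$-perfect clause is also what the paper has in mind (cf.\ the remark preceding Corollary~3).
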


\begin{proof}
$\Rightarrow$ is immediate, since $Flat \subseteq \mathcal{PGF}^\bot$ and $\mathcal{PGF}^\bot = \mathcal{GP}^\bot$ in this case.\\
$\Leftarrow$ Since $R$ is coherent we have that $\mathcal{DP} = \mathcal{PGF}$ (Theorem 2). The results follows by Lemma 4.
\end{proof}

\begin{remark}
The coherence is a sufficient condition on the ring in order to have $\mathcal{DP} = \mathcal{PGF}$, but it is not a necessary condition. If $R$ has finite global dimension but it is not coherent, then we still have that $\mathcal{GP} = \mathcal{DP} = \mathcal{PGF}$.
\end{remark}

\begin{proof}
Since $R$ has finite global dimension, and so, finite weak Gorenstein global dimension, we have that $\mathcal{PGF} = \mathcal{DP}$ (by Proposition 4). Also, since $gl.d. R < \infty$ we have that $\mathcal{GP} = Proj$, and therefore $Flat \subseteq \mathcal{GP}^\bot$. By Lemma 4, $\mathcal{GP} = \mathcal{DP}$. So $\mathcal{GP} = \mathcal{DP} = \mathcal{PGF}$ in this case.
\end{proof}

\begin{example}
We showed in \cite{iacob:17} that the ring
$\displaystyle R = \left[
                     \begin{array}{ccc}
                       \mathbb{Q} & \mathbb{Q} & \mathbb{R} \\
                       0 & \mathbb{Q} & \mathbb{R} \\
                       0 & 0 & \mathbb{Q} \\
                     \end{array}
                   \right] /
                   \left[
                     \begin{array}{ccc}
                       0 & 0 & \mathbb{R} \\
                       0 & 0 & 0 \\
                       0 & 0 & 0 \\
                     \end{array}
                   \right]$

is an example of a noncoherent ring of finite global dimension. By Remark 1, $\mathcal{GP} = \mathcal{DP} = \mathcal{PGF}$ over this ring.
\end{example}

We show (Proposition 7) that if $\mathcal{B}$ is a semi-definable class of right $R$-modules that contains the class of injective right $R$-modules then every module of finite Gorenstein $\mathcal{B}$ flat dimension has a special Ding projective precover. We prove first:\\
\begin{proposition}
Assume that $\mathcal{B}$ is a semi-definable class of right $R$-modules such that $\mathcal{B} \supseteq Inj$. Then every Gorenstein $\mathcal{B}$ flat module has a special Ding projective precover.
\end{proposition}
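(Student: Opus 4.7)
The plan is to use the complete cotorsion pair for $\mathcal{PGF}_{\mathcal{B}}$ directly, and then observe that the kernel is automatically flat and hence in $\mathcal{DP}^\bot$.

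Let $M \in \mathcal{GF}_{\mathcal{B}}$. Since $\mathcal{B}$ is semi-definable, the pair $(\mathcal{PGF}_{\mathcal{B}}, \mathcal{PGF}_{\mathcal{B}}^\bot)$ is a complete cotorsion pair by \cite{EIP}, Theorem 2.13. The first step is to invoke completeness to produce a short exact sequence
\[
0 \to A \to D \to M \to 0
\]
with $D \in \mathcal{PGF}_{\mathcal{B}}$ and $A \in \mathcal{PGF}_{\mathcal{B}}^\bot$. By Lemma 1, $D \in \mathcal{PGF}_{\mathcal{B}} \subseteq \mathcal{DP}$, so the left factor is already Ding projective, which is the first half of what is required.

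Next, I would pin down $A$. Since $\mathcal{B}$ is semi-definable, the class $\mathcal{GF}_{\mathcal{B}}$ is closed under kernels of epimorphisms by \cite{EIP}, Corollary 2.20. Because $D \in \mathcal{PGF}_{\mathcal{B}} \subseteq \mathcal{GF}_{\mathcal{B}}$ and $M \in \mathcal{GF}_{\mathcal{B}}$ by hypothesis, it follows that $A \in \mathcal{GF}_{\mathcal{B}}$. Combined with $A \in \mathcal{PGF}_{\mathcal{B}}^\bot$, this gives
\[
A \in \mathcal{GF}_{\mathcal{B}} \cap \mathcal{PGF}_{\mathcal{B}}^\bot = Flat
\]
by \cite{EIP}, Theorem 2.14 — exactly the identification used in the proof of Lemma 2.

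Finally, I would close the argument by noting that every flat module sits in $\mathcal{DP}^\bot$, directly by the definition of Ding projective modules (if $D' \in \mathcal{DP}$, the defining $Hom(-,F)$-exact projective resolution of $D'$ yields $Ext^1(D', F) = 0$ for every flat $F$). Hence $A \in \mathcal{DP}^\bot$, and the sequence $0 \to A \to D \to M \to 0$ is a special $\mathcal{DP}$-precover of $M$. There is essentially no obstacle; the only point that requires care is making sure to cite the right two results from \cite{EIP} (the completeness of the cotorsion pair, and the identification $\mathcal{GF}_{\mathcal{B}} \cap \mathcal{PGF}_{\mathcal{B}}^\bot = Flat$) in the correct order.
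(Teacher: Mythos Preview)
Your proof is correct and follows essentially the same argument as the paper: apply completeness of $(\mathcal{PGF}_{\mathcal{B}},\mathcal{PGF}_{\mathcal{B}}^\bot)$ to obtain $0\to A\to D\to M\to 0$, use that $\mathcal{GF}_{\mathcal{B}}$ is resolving to get $A\in\mathcal{GF}_{\mathcal{B}}\cap\mathcal{PGF}_{\mathcal{B}}^\bot=Flat$, and conclude via $\mathcal{PGF}_{\mathcal{B}}\subseteq\mathcal{DP}$ and $Flat\subseteq\mathcal{DP}^\bot$. The citations and the logical order match the paper's own proof almost verbatim.
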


\begin{proof}
Let $M \in \mathcal{GF}_{\mathcal{B}}$. Since $(\mathcal{PGF}_{\mathcal{B}}, \mathcal{PGF}_{\mathcal{B}}^\bot)$ is a complete hereditary cotorsion pair, there is an exact sequence $0 \rightarrow A \rightarrow B \rightarrow M \rightarrow 0$, with $B \in \mathcal{PGF}_{\mathcal{B}}$ and $A \in \mathcal{PGF}_{\mathcal{B}}^\bot$. Since $B \in \mathcal{PGF}_{\mathcal{B}} \subseteq \mathcal{GF}_{\mathcal{B}}$ and $M \in \mathcal{GF}_{\mathcal{B}}$, and $\mathcal{GF}_{\mathcal{B}}$ is a resolving class, it follows that $A \in \mathcal{GF}_{\mathcal{B}}$. Thus $A \in \mathcal{GF}_{\mathcal{B}} \bigcap \mathcal{PGF}_{\mathcal{B}}^\bot = Flat$ (by \cite{EIP}). The short exact sequence $0 \rightarrow A \rightarrow B \rightarrow M \rightarrow 0$ with $B \in \mathcal{PGF}_{\mathcal{B}} \subseteq \mathcal{DP}$ and $A \in Flat \subseteq \mathcal{DP}^\bot$ shows that $B \rightarrow M$ is a special Ding projective precover of $M$.
\end{proof}

In particular when $\mathcal{B} = Inj$ we have:\\
\begin{proposition}
Every Gorenstein flat module has a special Ding projective precover.
\end{proposition}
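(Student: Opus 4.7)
The plan is to derive this as the specialization of Proposition 7 to the case $\mathcal{B} = Inj$, the class of injective right $R$-modules. First I would verify the hypotheses of Proposition 7: the class $Inj$ trivially satisfies $\mathcal{B} \supseteq Inj$, and it is closed under arbitrary direct products. The one point requiring care is the semi-definability of $Inj$, namely that $Inj$ contains an elementary cogenerator of its definable closure $<Inj>$; this is the standard input (from \cite{EIP}) that justifies identifying the $\mathcal{B}$-relative constructions with their absolute counterparts throughout the paper.

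Once semi-definability is in hand, I would simply observe that with $\mathcal{B} = Inj$ the definitions unwind to give $\mathcal{GF}_{\mathcal{B}} = \mathcal{GF}$ (cycles of exact complexes of flats that remain exact after tensoring with any injective, which is the very definition of Gorenstein flat) and $\mathcal{PGF}_{\mathcal{B}} = \mathcal{PGF}$. Consequently every ingredient of the proof of Proposition 7 — the complete hereditary cotorsion pair $(\mathcal{PGF}_{\mathcal{B}}, \mathcal{PGF}_{\mathcal{B}}^\bot)$, the resolving property of $\mathcal{GF}_{\mathcal{B}}$, and the identification $\mathcal{GF}_{\mathcal{B}} \cap \mathcal{PGF}_{\mathcal{B}}^\bot = Flat$ — carries over verbatim to the absolute setting.

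Applying Proposition 7 then produces, for any $M \in \mathcal{GF}$, a short exact sequence $0 \to A \to B \to M \to 0$ with $B \in \mathcal{PGF} \subseteq \mathcal{DP}$ and $A \in Flat \subseteq \mathcal{DP}^\bot$, which is precisely a special Ding projective precover of $M$. The only real obstacle is the semi-definability of $Inj$, which I would cite from \cite{EIP} rather than reprove; apart from that, the statement is a pure corollary of Proposition 7 with no new content beyond the substitution $\mathcal{B} = Inj$.
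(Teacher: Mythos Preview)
Your approach is correct and is exactly what the paper does: the statement is presented, with no separate proof, as the $\mathcal{B} = Inj$ specialization of the preceding general result. Note, however, that the proposition you are specializing (and whose proof ingredients --- the complete cotorsion pair $(\mathcal{PGF}_{\mathcal{B}},\mathcal{PGF}_{\mathcal{B}}^{\bot})$, the resolving property of $\mathcal{GF}_{\mathcal{B}}$, and $\mathcal{GF}_{\mathcal{B}}\cap\mathcal{PGF}_{\mathcal{B}}^{\bot}=Flat$ --- you list) is Proposition~5 in the paper, not Proposition~7; Proposition~7 concerns modules of \emph{finite} Gorenstein $\mathcal{B}$-flat dimension, is proved using Proposition~5, and yields a kernel of finite flat dimension rather than a flat one, so citing it here would be circuitous.
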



We recall that $\mathcal{GC}_{\mathcal{B}}$ denotes the right orthogonal class of $\mathcal{GF}_{\mathcal{B}}$ (the Gorenstein $\mathcal{B}$ cotorsion modules). The proof of Proposition 7 uses the following result:\\

\begin{lemma}
Assume that $\mathcal{GF}_{\mathcal{B}}$ is closed under extensions. If $M$ has Gorenstein $\mathcal{B}$ flat dimension $n < \infty$ then there is an exact sequence $0 \rightarrow L \rightarrow G \rightarrow M \rightarrow 0$ with $G \in \mathcal{GF}_{\mathcal{B}}$ and with $L$ of finite flat dimension.
\end{lemma}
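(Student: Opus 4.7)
My plan is to argue by induction on $n = \mathrm{Gfd}_{\mathcal{B}}(M)$, using a single pushout to splice an inductive resolution of the first syzygy $K_0$ with the flat envelope that the definition of $\mathcal{GF}_{\mathcal{B}}$ provides. The base case $n=0$ is immediate ($L=0$, $G=M$). For the step $n \geq 1$, I would fix a Gorenstein $\mathcal{B}$ flat resolution $0 \to G_n \to \cdots \to G_0 \to M \to 0$ and set $K_0 = \ker(G_0 \to M)$; the truncation $0 \to G_n \to \cdots \to G_1 \to K_0 \to 0$ exhibits $\mathrm{Gfd}_{\mathcal{B}}(K_0) \leq n-1$, so the inductive hypothesis yields a short exact sequence $0 \to L' \to G' \to K_0 \to 0$ with $G' \in \mathcal{GF}_{\mathcal{B}}$ and $\mathrm{fd}(L') < \infty$. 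Since $G' \in \mathcal{GF}_{\mathcal{B}}$, the definition furnishes a short exact sequence $0 \to G' \xrightarrow{i} F \to G'' \to 0$ with $F$ flat and $G'' \in \mathcal{GF}_{\mathcal{B}}$ (take $F$ to be the degree-zero term and $G''$ the next cycle of an exact flat complex realizing $G'$).

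Writing $\phi$ for the composition $G' \twoheadrightarrow K_0 \hookrightarrow G_0$, I would form the pushout $P$ of $i$ and $\phi$. A direct pushout computation will produce two short exact sequences,
\[ 0 \to G_0 \to P \to G'' \to 0 \qquad \text{and} \qquad 0 \to L' \to F \to P \to M \to 0, \]
the first of which yields $P \in \mathcal{GF}_{\mathcal{B}}$ via the hypothesized closure under extensions. The second, a four-term sequence, splits at $L := \ker(P \to M) = \mathrm{im}(F \to P)$ into $0 \to L' \to F \to L \to 0$ and $0 \to L \to P \to M \to 0$; from the former and standard flat-dimension shifting one gets $\mathrm{fd}(L) \leq \mathrm{fd}(L') + 1 < \infty$, and the latter is the desired sequence with $G = P$.

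The main obstacle will be the bookkeeping in the pushout --- specifically, verifying that the map $F \to P$ has kernel exactly $L'$ (using that $i$ is injective and $\ker\phi = L'$, the latter requiring $K_0 \hookrightarrow G_0$ monic) and cokernel exactly $M$ (using $\mathrm{im}\,\phi = K_0$ and $G_0/K_0 = M$), and dually that $G_0 \hookrightarrow P$ with cokernel $G''$. Once this is in hand, the extension-closure hypothesis on $\mathcal{GF}_{\mathcal{B}}$ is invoked just once (to conclude $P \in \mathcal{GF}_{\mathcal{B}}$), and the remainder reduces to standard flat-dimension machinery; in particular no closure of $\mathcal{GF}_{\mathcal{B}}$ under kernels of epimorphisms is needed, since the inequality $\mathrm{Gfd}_{\mathcal{B}}(K_0)\le n-1$ is witnessed by an explicit truncation rather than an abstract shifting argument.
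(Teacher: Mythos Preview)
Your argument is correct, and it is genuinely different from the paper's. The paper does not use a pushout at all: instead it invokes (from \cite{EIP}) that extension closure of $\mathcal{GF}_{\mathcal{B}}$ makes this class covering, builds a $\mathcal{GF}_{\mathcal{B}}$-resolution $\ldots \to G_1 \to G_0 \to M \to 0$ whose syzygies $C_i$ all lie in $\mathcal{GC}_{\mathcal{B}} = \mathcal{GF}_{\mathcal{B}}^{\perp}$, and then uses the identification $\mathcal{GF}_{\mathcal{B}} \cap \mathcal{GC}_{\mathcal{B}} \subseteq \mathrm{Flat}$ (also from \cite{EIP}) to see that $C_{n-1}$ and then $G_1,\ldots,G_{n-1}$ are flat, so that $C_0$ has finite flat dimension and $0 \to C_0 \to G_0 \to M \to 0$ is the desired sequence. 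Your route is more elementary and self-contained: it needs only the definition of $\mathcal{GF}_{\mathcal{B}}$ (for the embedding $G' \hookrightarrow F$) and a single use of extension closure, and it sidesteps the covering/cotorsion machinery entirely. The paper's approach, on the other hand, gives slightly more structural information in passing (the intermediate $G_i$ are actually flat) and ties the lemma directly into the cotorsion-theoretic framework used elsewhere in the paper. Your pushout bookkeeping is accurate: since $i$ is monic, $G_0 \hookrightarrow P$ with cokernel $F/i(G') = G''$, and the kernel of $F \to P$ is $i(\ker\phi) \cong L'$ with cokernel $G_0/K_0 = M$, so the four-term sequence and the split at $L$ behave exactly as you claim.
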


\begin{proof}
Proof by induction on $n$. If $n=0$ then take $G=M$ and $L=0$.\\
Case $n \ge 1$.\\
We recall (from \cite{EIP} (Proposition 3.1, the proof of 1 $\Rightarrow$ 2) that if $\mathcal{GF}_{\mathcal{B}}$ is closed under extensions then we have that $\mathcal{GF}_{\mathcal{B}} \bigcap \mathcal{GC}_{\mathcal{B}} \subseteq Flat \bigcap C$ where $C$ is the class of cotorsion modules (if, moreover, $\mathcal{B}$ contains the class of injectives, then we have equality).\\

Since $\mathcal{GF}_{\mathcal{B}}$ is covering (by \cite{EIP} Proposition 2.19, because $\mathcal{GF}_{\mathcal{B}}$ is closed under extensions), there is an exact and $Hom(\mathcal{GF}_{\mathcal{B}},-)$ exact complex $ \ldots \rightarrow G_1 \xrightarrow{f_1}  G_0 \xrightarrow{f_0} M \rightarrow 0$ with each $G_i$ Gorenstein $\mathcal{B}$ flat and with each $C_i = Ker(f_i) \in \mathcal{GC}_{\mathcal{B}}$ for all $i$.
 Since the complex $0 \rightarrow C_{n-1} \rightarrow G_{n-1} \rightarrow \ldots \rightarrow G_0 \xrightarrow{f_0} M \rightarrow 0$ is exact and since $M$ has Gorenstein $\mathcal{B}$ flat dimension $n$, it follows that $C_{n-1} \in \mathcal{GF}_{\mathcal{B}}$. Since we also have $C_{n-1} \in \mathcal{GC}_{\mathcal{B}}$ it follows that $C_{n-1}$ is flat.



So there is an exact and $Hom(\mathcal{GF}_{\mathcal{B}}, -)$ exact sequence $0 \rightarrow C_{n-1} \rightarrow G_{n-1} \rightarrow \ldots \rightarrow G_1 \xrightarrow{f_1} \rightarrow G_0 \xrightarrow{f_0} M \rightarrow 0$ with $G_0$, $G_1$, $\ldots$, $G_{n-1}$ Gorenstein $\mathcal{B}$ flat, with $Ker(f_i)$ Gorenstein $\mathcal{B}$ cotorsion for all $i$, and with $C_{n-1}$ flat. \\

Let $C_i = Ker(f_i)$ for $0 \le i \le n-2$.  The short exact sequence $ 0 \rightarrow C_1 \rightarrow G_1 \xrightarrow{f_0} C_0 \rightarrow 0$ with both $C_0$ and $C_1$ Gorenstein $\mathcal{B}$ cotorsion gives that $G_1$ is also Gorenstein $\mathcal{B}$ cotorsion, and so it is both Gorenstein $\mathcal{B}$ flat and Gorenstein $\mathcal{B}$ cotorsion. Thus $G_1$ is flat. Similarly, $G_2$, $\ldots$, $G_{n-1}$ are flat modules. Thus we have an exact complex $0 \rightarrow C_{n-1} \rightarrow G_{n-1} \rightarrow \ldots \rightarrow G_1 \rightarrow C_0 \rightarrow 0$ with $C_{n-1}$ and all $G_1$, $\ldots$, $G_{n-1}$ flat modules. So there is an exact sequence $0 \rightarrow C_0 \rightarrow G_0 \rightarrow M \rightarrow 0$ with $G_0$ Gorenstein $\mathcal{B}$ flat and with $C_0$ of finite flat dimension.

\end{proof}

\begin{proposition}
Assume that $\mathcal{B}$ contains the injective modules and that $\mathcal{GF}_{\mathcal{B}}$ is closed under extensions (for example, this is the case when $\mathcal{B}$ is semi-definable and $Inj \subseteq \mathcal{B}$). Then every module of finite Gorenstein $\mathcal{B}$ flat dimension has a special Ding projective precover.
\end{proposition}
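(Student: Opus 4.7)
The plan is to combine Lemma 7 with Proposition 6 via a standard kernel construction. Given a module $M$ of finite Gorenstein $\mathcal{B}$ flat dimension, Lemma 7 (whose hypotheses match those of the proposition) supplies an exact sequence
\[
0 \to L \to G \to M \to 0
\]
in which $G \in \mathcal{GF}_{\mathcal{B}}$ and $L$ has finite flat dimension. The idea is to ``lift'' this presentation through a special Ding projective precover of $G$ and then check that the resulting short exact sequence is a special Ding projective precover of $M$.

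Next, I would apply Proposition 6 to $G$ (the parenthetical in the statement of Proposition 7 is exactly what makes Proposition 6 available here, via the $\mathcal{PGF}_{\mathcal{B}}$ cotorsion pair argument) to obtain an exact sequence
\[
0 \to F \to D \to G \to 0
\]
with $D \in \mathcal{PGF}_{\mathcal{B}} \subseteq \mathcal{DP}$ and $F$ flat. The composition $D \to G \to M$ is an epimorphism; letting $K$ be its kernel produces
\[
0 \to K \to D \to M \to 0,
\]
and a standard $3 \times 3$ (snake lemma) argument shows that $K$ sits in a short exact sequence $0 \to F \to K \to L \to 0$.

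Finally, I would verify that this is indeed a special Ding projective precover. Since $F$ is flat and $L$ has finite flat dimension, the extension $0 \to F \to K \to L \to 0$ forces $K$ to have finite flat dimension. By Lemma 6, every module of finite flat dimension lies in $\mathcal{DP}^\bot$, so $K \in \mathcal{DP}^\bot$. Combined with $D \in \mathcal{DP}$, the sequence $0 \to K \to D \to M \to 0$ is by definition a special Ding projective precover of $M$.

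The whole argument is essentially bookkeeping once Lemma 7 and Proposition 6 are in hand; the only substantive point to keep track of is that the flat dimension of $L$ propagates to $K$ via the extension $0 \to F \to K \to L \to 0$, which is where Lemma 6 becomes the decisive input for verifying the orthogonality condition.
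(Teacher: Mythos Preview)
Your argument is correct and is essentially identical to the paper's: both obtain $0 \to L \to G \to M \to 0$ with $G \in \mathcal{GF}_{\mathcal{B}}$ and $L$ of finite flat dimension, then pull back a special Ding projective precover $0 \to F \to D \to G \to 0$ of $G$ along $G \to M$ (the paper phrases this as a pushout/commutative $3\times 3$ diagram, you as the kernel of the composite plus the snake lemma), and conclude via Lemma~6 that the kernel has finite flat dimension and hence lies in $\mathcal{DP}^\bot$. Note only that the approximation lemma you invoke is Lemma~8 in the paper's numbering (not Lemma~7), and that the precover of $G$ you need comes from Proposition~6 applied to $G \in \mathcal{GF}_{\mathcal{B}} \subseteq \mathcal{GF}$, so $D \in \mathcal{PGF} \subseteq \mathcal{DP}$ rather than $D \in \mathcal{PGF}_{\mathcal{B}}$; this does not affect the argument.
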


\begin{proof}
If $M$ has finite Gorenstein $\mathcal{B}$ flat dimension, then there is an exact sequence $0 \rightarrow D \rightarrow T \rightarrow M \rightarrow 0$ with $T \in \mathcal{GF_{\mathcal{B}}}$ and with $D$ of finite flat dimension.
By Proposition 6, $T$ has a special $\mathcal{DP}$ precover, so there is an exact sequence $0 \rightarrow A \rightarrow B \rightarrow T \rightarrow 0$ with $B \in \mathcal{DP}$ and with $A$ flat.

Form the pushout diagram\\

\[
\begin{diagram}
\node[2]{0}\arrow{s}\node{0}\arrow{s}\\
\node[2]{A}\arrow{s}\arrow{e,=}\node{A}\arrow{s}\\
\node{0}\arrow{e}\node{X}\arrow{s}\arrow{e}\node{B}\arrow{s}\arrow{e}\node{M}\arrow{s,=}\arrow{e}\node{0}\\
\node{0}\arrow{e}\node{D}\arrow{s}\arrow{e}\node{T}\arrow{s}\arrow{e}\node{M}\arrow{e}\node{0}\\
\node[2]{0}\node{0}
\end{diagram}
\]

So there is an exact sequence: $0 \rightarrow X \rightarrow B \rightarrow M \rightarrow 0$ with $B \in \mathcal{DP}$ and with $f.d. X < \infty$ (because $f.d. D < \infty$ and $A$ is flat), which implies $X \in \mathcal{DP}^\bot$. Thus $D \rightarrow M$ is a special $\mathcal{DP}$ precover.
\end{proof}


When $\mathcal{B} = Inj$ we obtain:\\
\begin{proposition}
Let $R$ be any ring. Every module of finite Gorenstein flat dimension has a special Ding projective precover.
\end{proposition}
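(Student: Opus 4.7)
The plan is to obtain this as a specialization of Proposition 7 to the case $\mathcal{B}=Inj$. To do this, I need to verify that $Inj$ meets the hypotheses of Proposition 7: it must contain the injective modules (trivially true, as it equals $Inj$), and the class $\mathcal{GF}=\mathcal{GF}_{Inj}$ must be closed under extensions. The latter is the content of Saroch–Stovicek's result from \cite{saroch:18:gor}, which guarantees that $\mathcal{GF}$ is closed under extensions (indeed is resolving) over an arbitrary ring $R$. With these two conditions in hand, Proposition 7 applies verbatim with $\mathcal{B}=Inj$ and yields the conclusion.

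For clarity, I would briefly outline what the specialized proof looks like. Given a module $M$ with finite Gorenstein flat dimension, Lemma 7 (applied with $\mathcal{B}=Inj$, which is permissible since $\mathcal{GF}$ is closed under extensions) produces a short exact sequence $0\to D\to T\to M\to 0$ with $T\in\mathcal{GF}$ and $\mathrm{f.d.}\,D<\infty$. Proposition 6 (again specialized to $\mathcal{B}=Inj$) supplies a special Ding projective precover $0\to A\to B\to T\to 0$, where $B\in\mathcal{DP}$ and $A\in Flat$.

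Forming the pushout of $A\hookrightarrow B$ along $B\twoheadrightarrow T$'s preimage yields the short exact sequence $0\to X\to B\to M\to 0$ with $B\in\mathcal{DP}$, where $X$ sits in an extension of $D$ by $A$. Since both $D$ and $A$ have finite flat dimension, so does $X$; by Lemma 6, $X\in\mathcal{DP}^\bot$. Hence $B\to M$ is a special Ding projective precover.

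I do not anticipate any substantive obstacle: the only point requiring attention is the justification that $\mathcal{GF}$ is closed under extensions over an arbitrary ring, since this is precisely the hypothesis of Proposition 7 that one must discharge before specializing. Once that reference to \cite{saroch:18:gor} is in place, the statement is a direct corollary of Proposition 7 and no new argument is required.
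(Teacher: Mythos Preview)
Your proposal is correct and matches the paper's own approach: Proposition 8 is stated as the specialization of Proposition 7 to $\mathcal{B}=Inj$, and your justification that $\mathcal{GF}$ is closed under extensions over any ring (via \cite{saroch:18:gor}) is exactly what is needed to discharge the hypothesis. One minor remark: Proposition 6 is already the $\mathcal{B}=Inj$ case of Proposition 5, so no further specialization is needed there; and the diagram you describe is technically a pullback (the paper itself labels it ``pushout''), but the resulting short exact sequence $0\to X\to B\to M\to 0$ with $X$ an extension of $D$ by $A$ is correct either way.
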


Also, for $\mathcal{B}$ being the class of injective modules we have:\\

\begin{proposition}
Let $R$ be a ring such that every injective $R$-module has finite flat dimension. Then $\mathcal{GP} = \mathcal{DP} = \mathcal{PGF}$
\end{proposition}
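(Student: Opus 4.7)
The plan is to verify condition (4) of Theorem 3, namely $Inj^+\subseteq\mathcal{GP}^\bot$. Once this is established, Theorem 3 immediately yields $\mathcal{GP}=\mathcal{PGF}$, and the always-valid containments $\mathcal{PGF}\subseteq\mathcal{DP}\subseteq\mathcal{GP}$ then force the triple equality $\mathcal{GP}=\mathcal{DP}=\mathcal{PGF}$.

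First, I would establish the auxiliary fact that every left $R$-module $N$ of finite injective dimension belongs to $\mathcal{GP}^\bot$ (indeed $Ext^i(\mathcal{GP},N)=0$ for all $i\ge 1$). Given $M\in\mathcal{GP}$, pick a totally acyclic complex of projectives $\mathbf{P}=\ldots\to P_0\to P_{-1}\to\ldots$ with $M=Z_0(\mathbf{P})$, and set $M_k=Z_{-k}(\mathbf{P})$; each $M_k$ is again Gorenstein projective. The short exact sequences $0\to M_k\to P_{-k}\to M_{k+1}\to 0$, together with $Ext^j(P_{-k},N)=0$ for $j\ge 1$, yield dimension-shift isomorphisms $Ext^\ell(M,N)\cong Ext^{\ell+k}(M_k,N)$ for every $\ell\ge 1$ and every $k\ge 0$. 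Choosing $k$ larger than the injective dimension of $N$ forces the right-hand side to vanish.

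Second, I would show that the hypothesis implies every character module $I^+$, with $I$ an injective right $R$-module, has finite injective dimension as a left $R$-module. The tool is the Hom-tensor adjunction, which yields a natural isomorphism $Ext^i_R(-,I^+)\cong Tor_i^R(-,I)^+$. If $I$ has flat dimension at most $n$, then $Tor_i^R(-,I)=0$ for $i>n$, whence $Ext^i_R(-,I^+)=0$ for $i>n$; in other words, $I^+$ has injective dimension at most $n$ as a left $R$-module.

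Combining the two steps gives $Inj^+\subseteq\mathcal{GP}^\bot$, so Theorem 3 yields $\mathcal{GP}=\mathcal{PGF}$, and the sandwich $\mathcal{PGF}\subseteq\mathcal{DP}\subseteq\mathcal{GP}$ completes the proof. I do not foresee a genuine obstacle: the dimension-shift in the first step is entirely routine once a complete coresolution of $M$ is at hand, and the Tor-Ext duality in the second step is a standard consequence of Hom-tensor adjunction. The only spot that merits mild care is the left/right bookkeeping when transferring the flat dimension of an injective right $R$-module to the injective dimension of its character left $R$-module.
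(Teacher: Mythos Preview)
Your proof is correct and follows the same overall strategy as the paper: both verify condition~(4) of Theorem~3 by first noting that $I^{+}$ has finite injective dimension whenever the injective module $I$ has finite flat dimension, and then showing that any module of finite injective dimension lies in $\mathcal{GP}^{\bot}$. The only difference is in this last sub-step. The paper invokes the Bennis--Mahdou result that every Gorenstein projective module is a direct summand of a strongly Gorenstein projective module $G$, and then uses the periodicity $Ext^{i}(G,-)\cong Ext^{1}(G,-)$ to push the vanishing of high $Ext$ groups down to $Ext^{1}$. Your direct dimension-shift along the projective coresolution of $M$ reaches the same conclusion without the external input about strongly Gorenstein projectives, so your argument is slightly more self-contained; the paper's version is marginally shorter once that result is granted.
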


\begin{proof}
Let $I \in Inj$. By hypothesis $f.d. I = n < \infty$. It follows that $i.d. I^+ \le n < \infty$.\\
Let $H \in \mathcal{GP}$. Then, by \cite{bennis:07:strongly}, there is a strongly Gorenstein projective module $G$ such that $G \simeq H \oplus H'$.\\
Since $G$ is strongly Gorenstein projective there exists a short exact sequence $0 \rightarrow G \rightarrow P \rightarrow G \rightarrow 0$, with $P$ projective. It follows that $Ext^i(G,-) \simeq Ext^1(G, -)$ for all $i \ge 1$.\\
Since $i.d. I^+ \le n$ we have $Ext^i(G, I^+) =0$ for all $i \ge n+1$. By the above, $Ext^i (G. I^+) = 0$ for all $i \ge 1$. It follows that $Ext^1(H,I^+)=0$ also, for any injective module $I$. Since $I^+ \in \mathcal{GP}^\bot$, for any injective module $I$, it follows (by Theorem 3) that $\mathcal{GP} = \mathcal{PGF}$.
This implies that $\mathcal{PGF} = \mathcal{GP}$ it follows that $\mathcal{GP} = \mathcal{DP} = \mathcal{PGF}$.
\end{proof}

\begin{remark}
Assume that the ring $R$ is such that there exists a nonnegative integer $n$ such that $f.d. I \le n$ for all injective $R$-modules $I$. Then, by Proposition 9, we have that $\mathcal{GP} = \mathcal{DP} = \mathcal{PGF}$. (But, by \cite{IE}, this condition is equivalent to $R$ having finite weak Gorenstein global dimension.)
\end{remark}

\begin{proposition}
Assume that $\mathcal{DP}$ is covering. Then every module in $\mathcal{DP}^\bot$ has a projective cover.
\end{proposition}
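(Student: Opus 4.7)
The plan is to show, under the hypothesis that $\mathcal{DP}$ is covering, that every module $N \in \mathcal{DP}^\bot$ is in fact projective; since a projective module is its own projective cover (via $1_N$), the conclusion will follow immediately.

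First I would fix $N \in \mathcal{DP}^\bot$ and invoke the hypothesis to obtain a $\mathcal{DP}$-cover $\phi : D \rightarrow N$. This cover must be surjective: since $Proj \subseteq \mathcal{DP}$, any projective epimorphism onto $N$ is already a $\mathcal{DP}$-precover and therefore factors through $\phi$. Next, because the class of Ding projective modules is closed under extensions (a known property, also exploited by the author in the proof of Lemma~7), Wakamatsu's lemma applies to the surjective cover $\phi$ and yields $\Ker \phi \in \mathcal{DP}^\bot$.

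Then I would apply $\Ext^1(D,-)$ to the short exact sequence $0 \rightarrow \Ker \phi \rightarrow D \rightarrow N \rightarrow 0$. Since $D \in \mathcal{DP}$ and $\Ker \phi \in \mathcal{DP}^\bot$, this sequence splits. The standard minimality property of covers (any idempotent $e : D \rightarrow D$ with $\phi e = \phi$ must be an automorphism) forces the splitting to give $\Ker \phi = 0$, so $N \simeq D \in \mathcal{DP}$. We now have $N \in \mathcal{DP} \cap \mathcal{DP}^\bot$.

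To finish, I would use an exact complex of projectives witnessing $N \in \mathcal{DP}$ to extract a short exact sequence $0 \rightarrow N \rightarrow P_0 \rightarrow Z_{-1} \rightarrow 0$ with $P_0$ projective and $Z_{-1} \in \mathcal{DP}$. Since $N \in \mathcal{DP}^\bot$, we have $\Ext^1(Z_{-1}, N) = 0$, the sequence splits, and $N$ becomes a direct summand of $P_0$ and thus projective; then $1_N : N \rightarrow N$ is a projective cover. The main potential obstacle is the correct use of Wakamatsu's lemma, which requires both the surjectivity of $\phi$ and extension-closure of $\mathcal{DP}$; once these are in hand, the remaining steps are routine.
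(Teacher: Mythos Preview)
There is a genuine gap at the splitting step. From $D \in \mathcal{DP}$ and $\Ker\phi \in \mathcal{DP}^{\bot}$ you only get $\Ext^{1}(D,\Ker\phi)=0$, and this group classifies extensions of the form $0 \to \Ker\phi \to ? \to D \to 0$. It says nothing about the particular sequence $0 \to \Ker\phi \to D \xrightarrow{\phi} N \to 0$, whose class lives in $\Ext^{1}(N,\Ker\phi)$; you have no control over that group, since neither $N$ nor $\Ker\phi$ is known to lie in $\mathcal{DP}$. So the deduction ``this sequence splits'' is unjustified, and with it the conclusion $N \simeq D$ collapses. In fact the strengthened conclusion you are aiming for, that every $N \in \mathcal{DP}^{\bot}$ is itself projective, is not what the proposition asserts and is generally false: the very next proposition in the paper uses Proposition~10 to produce \emph{nontrivial} projective covers of flat modules in order to prove the ring is perfect.

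The paper's argument avoids this trap by working on the other side. Instead of trying to show $N \in \mathcal{DP}$, it observes that $\mathcal{DP}^{\bot}$, being a right $\Ext^{1}$-orthogonal class, is closed under extensions; since both $\Ker\phi$ and $N$ lie in $\mathcal{DP}^{\bot}$, so does $D$. Hence $D \in \mathcal{DP} \cap \mathcal{DP}^{\bot}$, and your last paragraph (which is correct) shows this intersection equals $Proj$. Thus $D$ is projective, and the minimality you already have for $\phi$ as a $\mathcal{DP}$-cover makes $\phi : D \to N$ a projective cover directly. Your argument is easily repaired along these lines: keep the surjectivity and Wakamatsu steps, replace the faulty splitting by the extension-closure of $\mathcal{DP}^{\bot}$, and conclude that the cover itself is projective rather than that $N$ is.
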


\begin{proof}
Let $M \in \mathcal{DP}^\bot$ and let $D \xrightarrow{f} M$ be a Ding projective cover. Then there is a short exact sequence $0 \rightarrow A \rightarrow D \xrightarrow{f} M \rightarrow 0$ with $A \in \mathcal{DP}^\bot$ and $D \in \mathcal{DP}$. Then $D \in \mathcal{DP}^\bot \bigcap \mathcal{DP}$, so $D$ is projective. Since any $u \in Hom(D,D)$ such that $fu = f$ must be an isomorphism, it follows that $D \rightarrow M$ is a projective cover.
\end{proof}

\begin{proposition}
If the class of Ding projectives is covering over a ring $R$, then the ring is perfect.
\end{proposition}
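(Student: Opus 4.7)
The plan is to reduce the statement to the classical Bass characterization of perfect rings via flat modules. First, I would recall that any flat module $F$ lies in $\mathcal{DP}^{\bot}$: this is immediate from the definition of Ding projectivity, since for $M \in \mathcal{DP}$ with defining totally acyclic complex of projectives $P$, the complex $\Hom(P,F)$ is exact, and dimension shifting then yields $\Ext^{1}(M,F) = 0$. Thus the inclusion $Flat \subseteq \mathcal{DP}^{\bot}$ holds over any ring.

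Next, since by hypothesis $\mathcal{DP}$ is covering, Proposition 10 applies and tells us that every module of $\mathcal{DP}^{\bot}$ admits a projective cover. Combining this with the inclusion above, I conclude that every flat left $R$-module has a projective cover.

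Finally, I would invoke Bass's theorem in the form proved by Enochs (and appearing in Enochs--Jenda's \emph{Relative Homological Algebra}): a ring $R$ is left perfect if and only if every flat left $R$-module admits a projective cover. This gives the desired conclusion that $R$ is perfect.

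The main obstacle is purely a matter of citation rather than substance: one must appeal to the non-trivial fact that the existence of projective covers for the flat modules alone already forces $R$ to be perfect (equivalently, forces every flat module to be projective). Apart from that appeal, the argument is a one-line combination of Proposition 10 with the defining orthogonality $Flat \subseteq \mathcal{DP}^{\bot}$.
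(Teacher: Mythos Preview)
Your proposal is correct and follows the same route as the paper: both combine $Flat \subseteq \mathcal{DP}^{\bot}$ with Proposition~10 to conclude that every flat module has a projective cover, and then deduce perfectness. The only difference is that the paper, instead of citing Bass/Enochs for the last step, gives the standard direct argument (build a minimal projective resolution of a flat $F$, observe that the differentials land in $J P_{n-1}$, tensor with $R/J$ to get zero differentials, and use $Tor_1(R/J,F)=0$ to force $P_1=0$); this is precisely the classical proof of the implication you invoke, so the two arguments are essentially identical in substance.
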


\begin{proof}
By Proposition 10 above, if $\mathcal{DP}$ is covering, then every flat module has a projective cover.
Let $F$ be a flat $R$-module. Consider a short exact sequence $0 \rightarrow F_0 \rightarrow P_0 \rightarrow F \rightarrow 0$ with $P_0 \rightarrow F $ a projective cover. Sine both $F$ and $P_0$ are flat modules, so is $F_0$, and therefore $F_0$ has a projective cover. Thus we can construct a minimal projective resolution of $F$, $\ldots \rightarrow P_2 \xrightarrow{d_2} P_1 \xrightarrow{d_1} P_0 \rightarrow F \rightarrow 0$ (i.e. an exact complex such that $P_0 \rightarrow F$ and $P_i \rightarrow Ker d_{i-1}$ are projective covers).\\
If $J$ is the Jacobson radical of $R$, then $d_n(P_n) \subseteq JP_{n-1}$ since $Ker (d_{n-1})$ is superfluous in $P_{n-1}$ and so $Ker (d_{n-1}) \subseteq JP_{n-1}$. So the deleted complex $\ldots \rightarrow R/J \otimes P_2 \rightarrow R/J \otimes P_1 \rightarrow R/J \otimes P_0 \rightarrow 0$ has zero differentials. Hence $Tor_1(R/J, F) \simeq R/J \otimes P_1 \simeq P_1/JP_1$. But since $F$ is flat, we have $Tor_1(R/J, F)=0$. So $P_1 = JP_1$. But then $P_1 =0$. Thus $F$ is a projective $R$ module.
Since every flat module is projective, the ring $R$ is perfect.
\end{proof}

If moreover, $R$ is coherent, then the converse is also true.

\begin{theorem}
Let $R$ be a coherent ring. Then the class of Ding projective modules is covering if and only if the ring $R$ is perfect.
\end{theorem}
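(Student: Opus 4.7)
The forward implication is already Proposition 11, so what remains is to prove that if $R$ is coherent and perfect, then the class $\mathcal{DP}$ is covering. My plan is to show that $\mathcal{DP}$ is precovering, is closed under direct summands, and is closed under direct limits, and then invoke Enochs' theorem that a precovering class closed under direct limits is a covering class.

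For the precovering property I would use coherence. By Theorem 2, $\mathcal{DP} = \mathcal{PGF}$ over a coherent ring. By the Saroch--Stovicek result already cited in Lemma 2, the pair $(\mathcal{PGF}, \mathcal{PGF}^\bot)$ is a complete cotorsion pair over any ring, so every module admits a special $\mathcal{PGF}$-precover, hence a $\mathcal{DP}$-precover. Closure of $\mathcal{PGF}$ under direct summands is also in \cite{saroch:18:gor} (Theorem 3.9), which transfers to $\mathcal{DP}$.

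For closure under direct limits, the perfect hypothesis does the real work. Since $R$ is perfect, $Flat = Proj$, so any exact complex of flat modules is literally an exact complex of projective modules, and the defining condition of staying exact under $I \otimes -$ for $I$ injective is common to $\mathcal{GF}$ and to $\mathcal{PGF}$. Therefore $\mathcal{GF} = \mathcal{PGF}$ in the perfect case. Since $\mathcal{GF}$ is closed under direct limits (over any ring by \cite{saroch:18:gor}; and classically so over a coherent ring), the class $\mathcal{PGF} = \mathcal{DP}$ inherits this closure.

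Putting the pieces together, $\mathcal{DP}$ is a precovering class closed under direct summands and under direct limits, so by Enochs' theorem every $R$-module admits a $\mathcal{DP}$-cover, and $\mathcal{DP}$ is covering. I expect no serious obstacle: the two nontrivial inputs are the collapse $\mathcal{GF} = \mathcal{PGF}$ forced by the perfect hypothesis and the Saroch--Stovicek completeness of the $\mathcal{PGF}$ cotorsion pair, both of which are already in play elsewhere in the paper. The only thing to be slightly careful about is invoking the form of Enochs' theorem that requires closure under direct summands in addition to direct limits and the precovering property — and that extra hypothesis is supplied by \cite{saroch:18:gor}, Theorem 3.9.
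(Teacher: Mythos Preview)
Your proof is correct and lands on the same core identification as the paper: under the hypotheses one has $\mathcal{DP} = \mathcal{PGF} = \mathcal{GF}$, and then the covering property of $\mathcal{GF}$ finishes the argument. The routing differs slightly. You reach $\mathcal{DP} = \mathcal{PGF}$ directly from Theorem~2 (coherence), whereas the paper takes a small detour through $\mathcal{GP}$: it first notes $\mathcal{DP} = \mathcal{GP}$ (since $Flat = Proj$ over a perfect ring), then uses coherence to get $Inj^{+} \subseteq Flat = Proj \subseteq \mathcal{GP}^{\bot}$ and invokes Theorem~3 to obtain $\mathcal{GP} = \mathcal{PGF}$. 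Your path is shorter here. Conversely, once $\mathcal{DP} = \mathcal{GF}$ is established, the paper simply cites that $\mathcal{GF}$ is a covering class, while you unpack that fact via Enochs' theorem (precovering plus closure under direct limits and summands). Both routes are valid; yours is more self-contained about why $\mathcal{GF}$ is covering, while the paper's is terser but additionally records the equality $\mathcal{DP} = \mathcal{GP}$ along the way.
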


\begin{proof}
"$\Rightarrow$" If $\mathcal{DP}$ is covering, then by Proposition 11, $R$ is a perfect ring.\\
"$\Leftarrow$"  Assume $R$ is perfect. Then $\mathcal{DP} = \mathcal{GP}$. Also, since $Proj = Flat$, we have that $\mathcal{PGF} = \mathcal{GF}$. Thus $\mathcal{GF} = \mathcal{PGF} \subseteq \mathcal{DP}$ if $R$ is perfect. Since $R$ is a coherent ring we have that $Inj^+ \subseteq Flat = Proj \subseteq \mathcal{GP}^\bot$. By Theorem 3, $\mathcal{GP} = \mathcal{PGF}$ in this case. So $\mathcal{PGF} = \mathcal{GF} = \mathcal{DP} = \mathcal{GP}$ in this case. In particular, since $\mathcal{GF}$ is a covering class, $\mathcal{DP}$ is covering.
\end{proof}

\end{document}